\let\oldref=\ref
\def\ref#1{(\oldref{#1})}
\def\theenumi{\roman{enumi}}
\theoremstyle{plain}
\newtheorem{prop}{Proposition}[section]
\newtheorem{thm}[prop]{Theorem}
\newtheorem{lem}[prop]{Lemma}
\newtheorem{cor}[prop]{Corollary}
\newtheorem{ques}[prop]{Question}
\theoremstyle{definition}
\newtheorem{ex}[prop]{Example}
\newtheorem{defn}[prop]{Definition}
\newtheorem{rem}[prop]{Remark}
\newif\ifSuppressEndOfProof\SuppressEndOfProoffalse
\def\p@figure{Fig. }
\def\p@enumii{}
\newcommand{\cc}{\EuScript{C}}
\newcommand{\ca}{\mathfrak{A}}
\newcommand{\cd}{\EuScript{D}}
\newcommand{\MS}{\mathcal{MS}}
\newcommand{\sms}{\mathcal{S}\MS}
\newcommand{\F}{\mathrm{Facets}}
\newcommand{\link}{\mathrm{link}}
\newcommand{\V}{\mathrm{V}}
\newcommand{\rnd}{\partial}
\newcommand{\N}{\mathrm{N}}
\newcommand{\z}{\mathbb{Z}}
\newcommand{\ifof}{if and only if }
\newcommand{\se}{\subseteq}
\newcommand{\su}{\subsetneq}
\newcommand{\dis}{\oplus}
\newcommand{\sm}{\setminus }
\renewcommand{\iff}{\Leftrightarrow}
\newcommand{\give}{$\Rightarrow$}
\newcommand{\rgive}{$\Leftarrow$}
\renewcommand{\l}{\left}
\renewcommand{\r}{\right}
\newcommand{\rg}{\rangle}
\renewcommand{\lg}{\langle}
\newcommand{\p}[1]{\left(#1\right)}
\newcommand{\f}[2]{\frac{#1}{#2}}
\newcommand{\tohi}{\emptyset}
\renewcommand{\b}[1]{\overline{#1}}
\newcommand{\tl}[1]{\widetilde{#1}}
\newcommand{\blt}{\bullet}
\renewcommand{\Im}{\mathrm{Im}\,}
\newcommand{\Tor}{\mathrm{Tor}}
\begin{document}
%%%%%%%%%%%%%%%%%%%%%%%%%%%%%%%%%%%%%%%%%%%%%%%%%%%%%%%%%%%%%%%%%%%%%%%%%%%%%%%%%%%%%%%%%%%%%%%%%%%%%%%%%%%%%%%%%%

\title{Chordality of Clutters with Vertex Decomposable Dual and Ascent of Clutters}
\author{Ashkan Nikseresht\footnote{While this paper was under review, the author's affiliation has been
 changed to: Department of Mathematics, Shiraz University, 71457-13565, Shiraz, Iran} \\
\it\small Department of Mathematics, Institute for Advanced Studies in Basic Sciences (IASBS),\\
\it\small P.O. Box 45195-1159, Zanjan, Iran.\\
\it\small E-mail: ashkan\_nikseresht@yahoo.com\\
}
\date{}
\maketitle

\begin{abstract}
In this paper, we consider the generalization of chordal graphs to clutters proposed by Bigdeli, et al in  J. Combin.
Theory, Series A (2017). Assume that $\cc$ is a $d$-dimensional uniform clutter. It is known that if $\cc$ is
chordal, then $I(\b\cc)$ has a linear resolution over all fields. The converse has recently been rejected, but the
following question which poses a weaker version of the converse is still open: ``if $I(\b\cc)$ has linear quotients,
is $\cc$ necessarily chordal?''. Here, by introducing the concept of the ascent of a clutter, we split this question
into two simpler questions and present some clues in support of an affirmative answer. In particular, we show that if
$I(\b\cc)$ is the Stanley-Reisner ideal of a simplicial complex with a vertex decomposable Alexander dual, then $\cc$
is chordal.
\end{abstract}

Keywords and Phrases:  chordal clutter, linear resolution, vertex decomposable simplicial
complex, ascent of a clutter, squarefree monomial ideal.\\
\indent 2010 Mathematical Subject Classification: 13F55, 05E40, 05C65.

%%%%%%%%%%%%%%%%%%%%%%%%%%%%%%%%%%%%%%%%%%%%%%%%%%%%%%%%%%%%%%%%%%%%%%%%%%%%%%%%%%%%%%%%%%%%%%%%%%%%%%%%%%%%%%%%%%
                                        \section{Introduction}
%%%%%%%%%%%%%%%%%%%%%%%%%%%%%%%%%%%%%%%%%%%%%%%%%%%%%%%%%%%%%%%%%%%%%%%%%%%%%%%%%%%%%%%%%%%%%%%%%%%%%%%%%%%%%%%%%%
In this paper all rings are commutative with identity, $K$ is a field and $S=K[x_1,\ldots, x_n]$ is the polynomial ring
in $n$ indeterminates over $K$. Given an arbitrary ideal $I$ of $S$, we can get a monomial ideal by taking its initial
ideal or its generic initial ideal with respect to some monomial order. Many of the properties of $I$ is similar or at
least related to its (generic) initial ideal, especially if $I$ is graded (see, for example, \cite[Section 3.3 \&
Corollary 4.3.18]{hibi}). Also if $I$ is a monomial ideal of $S$ then its polarization is a squarefree monomial ideal,
again with algebraic properties similar to $I$ (see \cite[Section 1.6]{hibi}). Therefore, if we know the algebraic
properties of squarefree monomial ideals well, then we can understand many of the algebraic properties of much larger
classes of ideals.

On the other hand, squarefree monomial ideals have a combinatorial nature and correspond to combinatorial
objects such as simplicial complexes, graphs, clutters and posets. Many researchers have tried to
characterize algebraic properties of squarefree monomial ideals, using their combinatorial counterparts, see
for example \cite{our chordal} and Part III of \cite{hibi} and their references. In this regard, two
important objectives are to classify combinatorially squarefree monomial ideals which are Cohen-Macaulay or
those which have a linear resolution (either over every field or over a fixed field). Indeed these two tasks
are equivalent under taking Alexander dual.

A well-known theorem of Fr\"oberg states that if $I$ is a squarefree monomial ideal generated in degree two, then $I$
has a linear resolution \ifof $I$ is the edge ideal of the complement of a choral graph. Motivated by this, many have
tried to generalize the concept of chordality to clutters or simplicial complexes in such a way that at least one side
of Fr\"oberg's theorem stays true in degrees $>2$ (see, for instance, \cite{w-chordal, CF1,chordal}). One of the most
promising such generalizations seems to be the concept of chordal clutters presented in \cite{chordal}. In
\cite{chordal} it is shown that many other classes of ``chordal clutters'' defined by other researchers, including the
class defined in \cite{w-chordal} which we call W-chordal clutters, is strictly contained in the class defined in
\cite{chordal}. They also show that if a clutter is chordal, then the circuit ideal of its complement has a linear
resolution over every field. Several clues were presented in \cite{chordal,simp ord} supporting the correctness of the
converse (see \cite[Question 1]{chordal}). But recently a counterexample to the converse was presented by Eric Babson
(see Example \ref{counter Ex}). Despite this it is still unknown whether the following statement which is a weaker
version of the converse is true or not: ``if the circuit ideal of the complement of a clutter has linear quotients,
then that clutter is chordal.'' Moreover, in \cite{simp ord}, it is shown that many numerical invariants of the ideal
corresponding to a chordal clutter can be combinatorially read off the clutter.

Here, after presenting a brief review of the main concepts and setting the notations, in Section 3 we prove that if the
Alexander dual of the clique complex of a clutter $\cc$ is vertex decomposable, then $\cc$ is chordal. As clique
complexes of W-chordal clutters are vertex decomposable, this generalizes the results of Subsection 3.1 of
\cite{chordal}. Then in Section 4, we define the notion of the ascent of a clutter and show how we can use this concept
to divide the question ``is $\cc$ chordal, given that the circuit ideal of the complement of $\cc$ has  linear
quotients?'' into two simpler questions.
%%%%%%%%%%%%%%%%%%%%%%%%%%%%%%%%%%%%%%%%%%%%%%%%%%%%%%%%%%%%%%%%%%%%%%%%%%%%%%%%%%%%%%%%%%%%%%%%%%%%%%%%%%%%%%%%%%
                                        \section{Preliminaries, notations and a counterexample}
%%%%%%%%%%%%%%%%%%%%%%%%%%%%%%%%%%%%%%%%%%%%%%%%%%%%%%%%%%%%%%%%%%%%%%%%%%%%%%%%%%%%%%%%%%%%%%%%%%%%%%%%%%%%%%%%%%

\paragraph{Algebraic background.}
Suppose that $I$ is a graded ideal of $S$ considered with the standard grading. This grading induces a natural grading
on $\Tor_i^S(K,I)$. We denote the degree $j$ part of $\Tor_i^S(K,I)$ by $\Tor_i^S(K,I)_j$ and its dimension over $K$ is
denoted by $\beta_{ij}^S(I)= \beta_{ij}(I)$. These $\beta_{ij}$'s are called the graded \emph{Betti numbers} of $I$. If
there is a $d\geq 0$ such that $\beta_{ij}(I)=0$ for $j \neq i+d$, it is said that $I$ has a \emph{linear} (or
\emph{$d$-linear}) \emph{resolution}. Also we say that $I$ has \emph{linear quotients} with respect to an ordered
system of homogenous generators $f_1, \ldots, f_t$, if for each $i$ the ideal $\lg f_1, \ldots, f_{i-1} \rg: f_i$ is
generated by linear forms. It is known that if $I$ is generated in degree $d$ and has linear quotients with respect to
some system of homogenous generators, then $I$ has a $d$-linear resolution (see \cite[Proposition 8.2.1]{hibi}).

Now assume that $I$ is a squarefree monomial ideal, that is, $I$ is generated by some squarefree monomials. Then $I$
has a unique smallest generating set consisting of squarefree monomials, say $x_{F_1}, \ldots, x_{F_t}$ where
$x_F=\prod_{i\in F} x_i$ for $F\se [n]=\{1,\ldots,n\}$. In this case, if $I$ has linear quotients with respect to a
permutation of this minimal system of generators, we simply say that $I$ has linear quotients. Also a specific order of
minimal generators of $I$, for which the aforementioned colon ideals are linear, is called an \emph{admissible order}.
According to \cite[Corollary 8.2.4]{hibi}, $x_{F_1}, \ldots, x_{F_t}$ is an admissible order for the ideal they
generate, \ifof for each $i$ and all $j<i$, there is a $l\in F_j\sm F_i$ and a $k<i$ such that $F_k\sm F_i=\{l\}$. For
more details on these algebraic concepts see \cite{hibi}.

\paragraph{Clutters.}
A \emph{clutter} $\cc$ on the vertex set $V= \V(\cc)$ is a family of subsets of $V$ which are pairwise incomparable
under inclusion. We call the elements of $\cc$ \emph{circuits}. For any subset $F$ of $V$ we set $\dim F=|F|-1$. If all
circuits of $\cc$ have the same dimension $d$, we say that $\cc$ is a \emph{$d$-dimensional uniform} clutter or a
\emph{$d$-clutter} for short. If $v\in \V(\cc)$, then by $\cc-v$ we mean the clutter on $\V(\cc)\sm \{v\}$ with
circuits $\{F\in \cc|v\notin F\}$. For simplicity, we write for example $ab$, $Ex$ or $Eab$ instead of $\{a,b\}$,
$E\cup\{x\}$ or $E\cup\{a,b\}$ for $a,b,x\in V$ and $E\se V$. If we assume that $V=[n]$, then $I(\cc)$ is defined as
the ideal of $S$ generated by $\{x_F|F\in \cc\}$.

From now on, we always assume that $\cc$ is a $d$-clutter with $|\V(\cc)|=n$.  We call $\cc$ \emph{complete}, when all
$d$-dimensional subsets of $V=\V(\cc)$ are in $\cc$. A \emph{clique} of $\cc$ is a subset $A$ of $V$ such that
$\cc_{A}$ is complete, where $\cc_A=\{F\in \cc|F\se A\}$ is the induced $d$-clutter on the vertex set $A$. The
complement $\b \cc$ of $\cc$ is the $d$-clutter with the same vertex set as $\cc$ and circuits $\{F\se V| \dim F=d, F
\notin \cc\}$.

The set $\{e\se V|\dim e=d-1, \exists F\in \cc\quad e\se F\}$ is called the set of \emph{maximal subcircuits} of $\cc$
and is denoted by $\MS(\cc)$. For a $(d-1)$-dimensional subset $e$ of $V$, the \emph{closed neighborhood} $N_\cc[e]$ is
defined as $e\cup \{v\in V| ev\in \cc\}$. If $e\in \MS(\cc)$, then $\cc-e$ means the clutter on the vertex set $V$ with
circuits $\{F\in \cc|e\not\se F\}$.

A maximal subcircuit $e$ of $\cc$ is called a \emph{simplicial maximal subcircuit}, when $N_\cc[e]$ is a clique. We
denote the set of all simplicial maximal subcircuits of $\cc$ by $\sms(\cc)$. If for each $1\leq i< t$ there is an
$e_i\in \sms(\cc_{i-1})$, where $\cc_0=\cc$ and $\cc_i= \cc_{i-1}-e_i$, such that $\cc_t$ has no circuits, then $\cc$
is called \emph{chordal} (see \cite[Section 3]{chordal}). In the case that $d=1$ (that is, $\cc$ is graph) this notion
coincides with the usual notion of chordal graphs. Theorem 3.3 of \cite{chordal} states that if $\cc$ is chordal and
not complete, then $I(\b \cc)$ has a linear resolution over every field. There is an example showing that the converse
is not true, that is, there is a non-chordal clutter $\cc$ with $I(\b\cc)$ having a linear resolution over every field
(see Example \ref{counter Ex}). But it is still unknown whether there is a non-chordal clutter $\cc$ with $I(\b\cc)$
having linear quotients. For further reference, we label this statement:

\def\theenumi{\Alph{enumi}}
\begin{enumerate}
%\item \label{A} If $I(\b \cc)$ has a linear resolution over every field, then $\cc$ is chordal.
\item \label{B} If $I(\b \cc)$ has linear quotients, then $\cc$ is chordal.
\end{enumerate}
\def\theenumi{\roman{enumi}}

\paragraph{Simplicial complexes.}
A \emph{simplicial complex} $\Delta$ on the vertex set $V=\V(\Delta)$ is a family of subsets of $V$ (called \emph{faces
}of $\Delta$) such that if $A\se B\in \Delta$, then $A\in \Delta$. The dimension of $\Delta$ is defined as $\dim
\Delta= \max_{F\in \Delta}\dim F$. The set of maximal faces of $\Delta$ which are called \emph{facets} is denoted by
$\F(\Delta)$. If $|\F(\Delta)|=1$, then $\Delta$ is called a \emph{simplex}.

If all facets of $\Delta$ have the same dimension, we say that $\Delta$ is \emph{pure}. In this case
$\F(\Delta)$ is a $d$-dimensional uniform clutter. Also if $\cd$ is a clutter, then $\lg \cd \rg$ denotes the
simplicial complex $\Delta$ with $\F(\Delta)= \cd$. It should be clear that $\lg \F(\Delta) \rg= \Delta$ and
$\F(\lg \cd \rg) =\cd$, for any simplicial complex $\Delta$ and any (not necessarily uniform) clutter $\cd$.
Another simplicial complex associated to a $d$-clutter $\cc$ is the \emph{clique complex} $\Delta(\cc)$ of
$\cc$ defined as the family of all subsets $L$ of $\V(\cc)$ with the property that $L$ is a clique in $\cc$.
Note that all subsets of $\V(\cc)$ with size $\leq d$ are cliques by assumption.

For a face $F$ of $\Delta$, we define $\link_\Delta F= \{G\sm F|F\se G\in \Delta\}$, which is a simplicial complex on
the vertex set $V\sm F$. Also if $v\in V$, $\Delta-v$ is the simplicial complex on the vertex set $V\sm \{v\}$ with
faces $\{F\in \Delta|v\notin F\}$.

Assuming that $\V(\Delta)=[n]$, the ideal of $S$ generated by $\{x_F|F$ is a minimal non-face of $\Delta\}$ is called
the \emph{Stanley-Reisner ideal} of $\Delta$ and is denoted by $I_\Delta$. When $\f{S}{I_\Delta}$ is a Cohen-Macaulay
ring, $\Delta$ is said to be Cohen-Macaulay over $K$.
 %and $\Delta|_L =\{G\in \Delta| G\se L\}$. Moreover, we call $\Delta$ \emph{$d$-complete} when $\Delta$ has
%all $(d+1)$-subsets of $[n]$. We call a family $\cc$ of $r$-dimensional faces of $\Delta$ an \emph{$r$-cycle}, if $\cc$
%is a cycle as a clutter.

Let $A$ be a commutative ring with identity and $-1\leq d\leq \dim \Delta$ and denote by $\tl{C}_d(\Delta)=
\tl{C}_d(\Delta, A)$ the free $A$-module whose basis is the set of all $d$-dimensional faces of $\Delta$. Consider the
$A$-homomorphism $\rnd_d: \tl{C}_d(\Delta) \to \tl{C}_{d-1}(\Delta)$ defined by
 $$\rnd_d(\{v_0, \ldots, v_d\}) =\sum_{i=0}^d (-1)^i \{v_0, \ldots, v_{i-1},v_{i+1},\ldots, v_d\},$$
where $v_0<\cdots <v_d$ for a fixed total order $<$ on $\V(\Delta)$. Then $(\tl{C}_\blt, \rnd_\blt)$ is a complex of
free $A$-modules and $A$-homomorphisms called the \emph{augmented oriented chain complex} of $\Delta$ over $A$. We
denote the $i$-th homology of this complex by $\tl{H}_i(\Delta; A)$.

By the \emph{Alexander dual} of a simplicial complex $\Delta$ we mean $\Delta^\vee= \{\V(\Delta) \sm F| F\se \V(\Delta)
,\,F\notin \Delta\}$ and also we set $\cc^\vee=\{\V(\cc)\sm F\big| F\in \b{\cc}\}$. Then it follows from the
Eagon-Reiner theorem (\cite[Theorem 8.1.9]{hibi}) and the lemma below that $I(\b{\cc})$ has a linear resolution over
$K$, \ifof $\lg \cc^\vee \rg$ is Cohen-Macaulay over $K$. For more details on simplicial complexes and related
algebraic concepts the reader is referred to \cite{hibi}. We frequently use the following lemma in the sequel without
any further mention.

\begin{lem}[{\cite[Lemma 1.1]{our chordal}}]\label{transition}
Let $\cc$ be a $d$-clutter. Then
\begin{enumerate}
\item $I(\b\cc)=I_{\Delta(\cc)}$;
\item $\lg \cc^\vee \rg= (\Delta(\cc))^\vee$.
\end{enumerate}
\end{lem}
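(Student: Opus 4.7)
The plan is to prove both parts by directly unpacking the definitions, relying on the fact that in a $d$-clutter $\cc$ every subset of $\V(\cc)$ of size at most $d$ is automatically a clique, so the only ``interesting'' cliques are the $(d+1)$-subsets, and these are cliques precisely when they lie in $\cc$.

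For part (i), I would show that the minimal non-faces of $\Delta(\cc)$ are exactly the circuits of $\b\cc$. First, if $F\in\b\cc$ then $|F|=d+1$ and $F\notin\cc$, so $F$ is not a clique and hence $F\notin\Delta(\cc)$; meanwhile every proper subset of $F$ has size $\leq d$, so lies in $\Delta(\cc)$, so $F$ is a minimal non-face. Conversely, if $F$ is a minimal non-face of $\Delta(\cc)$, I would argue $|F|=d+1$: we already have $|F|\geq d+1$ since smaller sets are cliques, and if $|F|>d+1$ then failure of $\cc_F$ to be complete provides a $(d+1)$-subset $G\su F$ with $G\notin\cc$, hence $G\notin\Delta(\cc)$, contradicting minimality. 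Thus $|F|=d+1$ and $F\notin\cc$, giving $F\in\b\cc$. Since $I(\b\cc)$ and $I_{\Delta(\cc)}$ are both squarefree monomial ideals and share the same minimal monomial generators $\{x_F\mid F\in\b\cc\}$, they are equal.

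For part (ii), I would use part (i) to identify the facets of $(\Delta(\cc))^\vee$ with $\cc^\vee$ and then conclude by purity. A facet of $(\Delta(\cc))^\vee$ is, by definition, a set $\V(\cc)\sm F$ where $F$ is a minimal non-face of $\Delta(\cc)$; by (i) these minimal non-faces are exactly the elements of $\b\cc$, so the facets of $(\Delta(\cc))^\vee$ are precisely $\{\V(\cc)\sm F\mid F\in\b\cc\}=\cc^\vee$. On the other hand, $\lg\cc^\vee\rg$ is by definition the simplicial complex whose facet set is $\cc^\vee$. Since $\cc^\vee$ is a clutter of sets of constant size $n-d-1$ (as $\b\cc$ is $d$-uniform), both complexes are pure of the same dimension, are generated by the same facets, and hence coincide.

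The main thing to be careful with is the size argument in the converse direction of part (i) — specifically, that a minimal non-face of $\Delta(\cc)$ cannot have size exceeding $d+1$. Everything else is a bookkeeping exercise in translating between the three descriptions: circuits of $\b\cc$, non-cliques of $\cc$, and non-faces of $\Delta(\cc)$. I also need to implicitly verify that the Alexander dual of a simplicial complex is itself a simplicial complex (closure under subsets), which follows immediately from the fact that supersets of non-faces are non-faces.
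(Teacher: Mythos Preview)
The paper does not prove this lemma; it is quoted from \cite[Lemma 1.1]{our chordal} and used without proof. Your argument is correct and is exactly the natural definitional verification one would supply: the key observation is that every set of size at most $d$ is a clique by convention, so minimal non-faces of $\Delta(\cc)$ must have size exactly $d+1$ and coincide with $\b\cc$, from which both parts follow. The only small remark is that in part (ii) you do not actually need the purity observation---once you know the two simplicial complexes have the same facet set, they are equal regardless of purity, since a simplicial complex is determined by its facets.
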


When working with both clutters and simplicial complexes, one should notice the differences between the
similar concepts and notations defined for these objects. For example, we say that a $d$-dimensional
simplicial complex $\Delta$ is \emph{$i$-complete} to mean that $\Delta$ has all possible faces of dimension
$i$, where $i\leq d$. But when we are talking about a $d$-clutter $\cc$, there is no need to mention the
dimension and say that $\cc$ is $i$-complete, since we must have $i=d$ which is implicit in $\cc$ and it is
completely meaningless to say that $\cc$ is $i$-complete for $i<d$. Therefore we just say that $\cc$ is
complete. Another difference arises from the concept of Alexander dual which is illustrated in the following
example.
\begin{ex}\label{dual-exam}
Let $\cc=\{125,235,345\}$ be a 2-clutter on $[5]$, $\Gamma=\lg \cc \rg$ and $\Delta=\Delta(\cc)$. Note that
$14 \notin \Gamma$ because it is not contained in any facet of $\Gamma$. But by definition, 14 is a clique of
$\cc$ and hence $14\in \Delta=\lg \{125,235,345,13,14,24\} \rg$. Now for example $245\in \bar\cc$ and hence
$[5]\sm 245=13\in \cc^\vee$. Indeed, $\cc^\vee=\{13,15,23,24,25,35,45\}$, which by \ref{transition} is equal
to the set of facets of $\Delta^\vee$. But as $14\notin \Gamma$, we see that $[5]\sm 14= 235\in \Gamma^\vee$.
In fact, $\lg \cc \rg^\vee= \Gamma^\vee= \lg \{235,245,135\}\rg \neq \lg \cc^\vee \rg$.

\end{ex}

Recall that $\Delta_W=\{F\in \Delta|F\se W\}$ where $W\se V$.
\begin{thm}[{Fr\"oberg \cite{froberg}}]\label{Fro}
The ideal $I_\Delta$ has a $t$-linear resolution over $K$, \ifof $\tl H_i(\Delta_W; K)=0$ for all $W\se \V(\Delta)$ and
$i\neq t-2$.
\end{thm}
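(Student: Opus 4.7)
The plan is to deduce the theorem from Hochster's formula, which expresses the graded Betti numbers of the Stanley-Reisner ideal $I_\Delta$ in terms of the reduced simplicial homology of induced subcomplexes. In the form suited to $I_\Delta$, this formula reads
$$\beta_{i,j}(I_\Delta) \;=\; \sum_{W \se \V(\Delta),\ |W|=j} \dim_K \tl H_{j-i-2}(\Delta_W; K),$$
which is a standard fact (see, e.g., \cite[Corollary 8.1.4]{hibi}). Once this is in hand, the rest of the argument is purely a bookkeeping translation between the vanishing of Betti numbers and the vanishing of homology.

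By definition, $I_\Delta$ has a $t$-linear resolution over $K$ precisely when $\beta_{i,j}(I_\Delta) = 0$ for every pair $(i,j)$ with $j \neq i + t$. Since the right-hand side of the displayed formula is a sum of nonnegative integers, this is equivalent to the vanishing $\tl H_{|W|-i-2}(\Delta_W; K) = 0$ for every $W \se \V(\Delta)$ and every $i$ with $|W| - i - 2 \neq t - 2$. For each fixed $W$, the quantity $|W| - i - 2$ ranges over all integers as $i$ varies over $\z$, so this collection of vanishings is in turn equivalent to $\tl H_k(\Delta_W; K) = 0$ for every $W \se \V(\Delta)$ and every $k \neq t - 2$, which is exactly the statement of the theorem.

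The main point to be careful about is the indexing shift between $I_\Delta$ and $S/I_\Delta$, since Hochster's formula is often stated in the form $\beta_{i,j}(S/I_\Delta) = \sum_{|W|=j}\dim_K \tl H_{j-i-1}(\Delta_W;K)$, and the version used above follows from the identity $\beta_{i,j}(I_\Delta) = \beta_{i+1,j}(S/I_\Delta)$ valid for $i \geq 0$. This reindexing is the only place where one might conceivably be off by one, so it is the principal detail to check; apart from it, the proof is entirely mechanical.
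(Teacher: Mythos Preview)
The paper does not give a proof of this theorem; it is quoted from \cite{froberg} (and is also a standard consequence of Hochster's formula as presented in \cite[Chapter 8]{hibi}). So there is no ``paper's own proof'' to compare against, and your derivation via Hochster's formula is exactly the standard modern argument.

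Your proof is correct, with one small imprecision worth tightening. You write that ``$|W|-i-2$ ranges over all integers as $i$ varies over $\z$,'' but in Hochster's formula the homological index $i$ only runs over $i\geq 0$, so for a fixed $W$ the values $k=|W|-i-2$ only cover $k\leq |W|-2$. This does not affect the equivalence, because $\tl H_k(\Delta_W;K)=0$ automatically for $k\geq |W|-1$: if $W\in\Delta$ then $\Delta_W$ is a simplex with vanishing reduced homology, and if $W\notin\Delta$ then $\dim\Delta_W\leq |W|-2$. Mentioning this one-line observation would make the bookkeeping airtight; otherwise the argument is complete.
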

Note that in the case that $\Delta= \Delta(\cc)$, then since all possible faces of dimension less than $d$
are in $\Delta$, it follows that $\tl H_i(\Delta_W; K)=0$ for all $W\se \V$ and $i<d-1$. Hence $I(\b \cc)$
has a $(d+1)$-linear resolution over $K$, \ifof $\tl H_i(\Delta_W; K)=0$ for all $W\se \V(\Delta)$ and $i\geq
d$. Using this we can state an example of a non-chordal clutter $\cc$ with $I(\b\cc)$ having a linear
resolution over every field. This example is due to Eric Babson who visited IASBS as a lecturer in the first
Research School on Commutative Algebra and Algebraic Geometry in 2017.

\begin{ex}[Babson]\label{counter Ex}
Let $\cc$ be the 2-clutter shown in \oldref{Fig-Dunce} which is a triangulation of the dunce hat. Then $\Delta=
\Delta(\cc)$ is obtained by adding the missing edges (1-dimensional subsets of vertices) to $\Delta'= \lg \cc \rg$,
since $\cc$ has no cliques on more that 3 vertices. Therefore, $\tl H_t(\Delta; K)= \tl H_t(\Delta'; K)$ for every
$t\geq 2$. It is known that the dunce hat is a contractible space and hence $0= \tl H_t(\Delta'; K)$ for all $t\geq 2$.
Also it is easy to verify that for any $W\su [8]$, $\cc_W$ is chordal (indeed, any proper subclutter of $\cc$ has an
edge which is contained in exactly one triangle and hence is simplicial) and thus has a linear resolution over every
field. Therefore, $\tl H_t(\Delta_W; K)= \tl H_t(\Delta(\cc_W); K)=0$ for all $t\geq 2$. Consequently, by F\"oberg's
theorem $I_\Delta= I(\b\cc)$ has a linear resolution over every field.
\end{ex}
\begin{figure}[!ht]
  \centering
  \includegraphics[scale=.4]{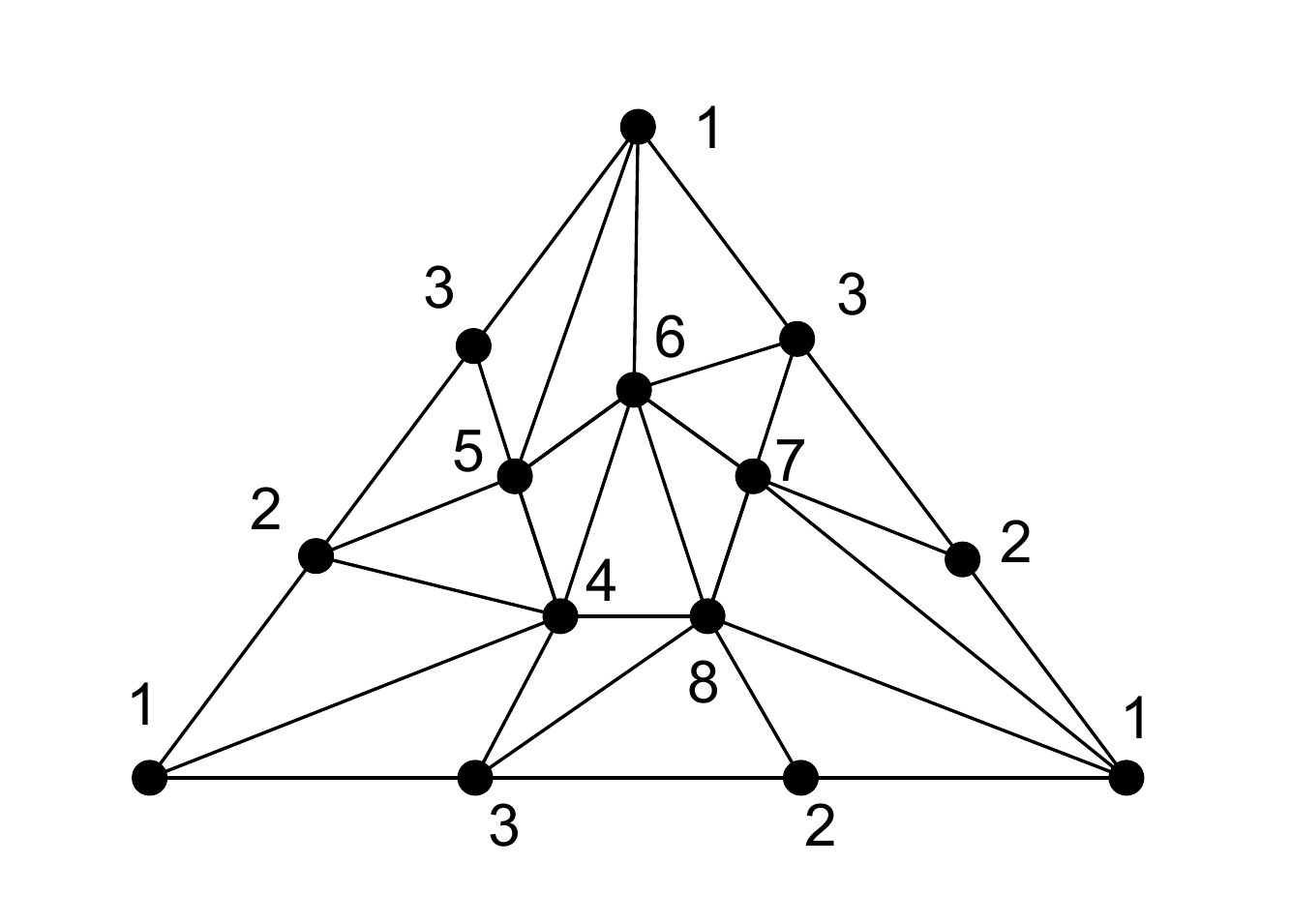}
  \caption{A triangulation of the dunce hat; the circuits are the triangles}\label{Fig-Dunce}
\end{figure}

%%%%%%%%%%%%%%%%%%%%%%%%%%%%%%%%%%%%%%%%%%%%%%%%%%%%%%%%%%%%%%%%%%%%%%%%%%%%%%%%%%%%%%%%%%%%%%%%%%%%%%%%%%%%%%%%%%
                         \section{Clutters with vertex decomposable dual are chordal}
%%%%%%%%%%%%%%%%%%%%%%%%%%%%%%%%%%%%%%%%%%%%%%%%%%%%%%%%%%%%%%%%%%%%%%%%%%%%%%%%%%%%%%%%%%%%%%%%%%%%%%%%%%%%%%%%%%

A vertex $v$ of a nonempty simplicial complex $\Delta$ is called a \emph{shedding vertex}, when no face (or
equivalently facet) of $\link_\Delta(v)$ is a facet of $\Delta-v$.  Recall that a nonempty simplicial complex
$\Delta$ is called \emph{vertex decomposable}, when either it is a simplex or there is a shedding vertex
$v\in V(\Delta)$ such that both $\link_\Delta v$ and $\Delta- v$ are vertex decomposable. This concept was
first introduced in \cite{provan} in connection with the Hirsch conjecture which has applications in the
analysis of the simplex method in linear programming.

\begin{lem}
Suppose that $\Delta$ is a pure $d$-dimensional simplicial complex. Then $v$ is a shedding vertex \ifof
$\Delta-v$ is pure with $\dim(\Delta-v)=d$.
\end{lem}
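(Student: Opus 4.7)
The plan is to unpack the shedding-vertex condition and compare maximality of faces in $\Delta - v$ with maximality in $\link_\Delta v$, using purity of $\Delta$ to control dimensions. Both implications reduce to a short accounting of how a face behaves simultaneously in $\Delta$, in $\Delta - v$, and in $\link_\Delta v$.

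For the ``\rgive'' direction, suppose $\Delta - v$ is pure with $\dim(\Delta - v) = d$. Let $F$ be any facet of $\link_\Delta v$. Then $F \cup \{v\} \in \Delta$, so $\dim F \leq d-1$ because $\dim \Delta = d$. If $F$ happened to be a facet of $\Delta - v$, purity would force $\dim F = d$, a contradiction. Thus no facet of $\link_\Delta v$ is a facet of $\Delta - v$, which is exactly the shedding condition.

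For the ``\give'' direction, assume $v$ is a shedding vertex and let $F$ be a facet of $\Delta - v$; the goal is $\dim F = d$. If $F$ is already a facet of $\Delta$, then purity of $\Delta$ gives $\dim F = d$ and we are done. Otherwise some $G \in \Delta$ strictly contains $F$, and maximality of $F$ in $\Delta - v$ forces $v \in G$, whence $F \cup \{v\} \in \Delta$, i.e.\ $F \in \link_\Delta v$. A short check shows $F$ is in fact a facet of $\link_\Delta v$: if $F \su F'$ for some $F' \in \link_\Delta v$, then $F' \in \Delta$ with $v \notin F'$, contradicting maximality of $F$ in $\Delta - v$. The shedding hypothesis now forbids this $F$ from being a facet of $\Delta - v$, yielding the desired contradiction.

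The argument is essentially bookkeeping among $\Delta$, $\Delta - v$, and $\link_\Delta v$; the only mildly delicate step is the promotion of $F$ from a face of $\link_\Delta v$ to a \emph{facet} of $\link_\Delta v$ in the forward implication, and that is dispatched by the one-line maximality argument above. No real obstacle is anticipated.
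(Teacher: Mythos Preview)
Your proof is correct and follows essentially the same approach as the paper's. The only cosmetic difference is that the paper invokes the ``no \emph{face} of $\link_\Delta v$ is a facet of $\Delta-v$'' formulation of the shedding condition directly, so it does not need your one-line promotion of $F$ to a facet of $\link_\Delta v$; otherwise the two arguments coincide.
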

\begin{proof}
(\give): Assume that $\Delta-v$ is not pure or $\dim(\Delta-v)\neq d$. Since every facet of $\Delta-v$ has
dimension $d-1$ or $d$, there is a facet $F$ of $\Delta-v$ such that $\dim F= d-1$.  As $F\in \Delta$ and
$\Delta$ is pure, we see that $F\su F'$ for some $F'\in \F(\Delta)$. Hence $F'=Fv\in \Delta$ and $F\in
\link_\Delta(v)$, which contradicts the shedding property for $v$.

(\rgive): Every face of $\link_\Delta(v)$ has dimension $\leq d-1$ and by assumption $\Delta-v$ is pure and
of dimension $d$. Thus $\link_\Delta(v) \cap \F(\Delta-v)=\tohi$ and $v$ is a shedding vertex.
\end{proof}

\begin{ex}
Suppose that $\Gamma$ is as in Example \oldref{dual-exam}. By choosing 1 and then 4 as the shedding vertex,
we see that $\Gamma$ is vertex decomposable. Here $\Gamma-5=\lg 12, 23, 34 \rg$ is pure with dimension
$1=\dim (\Gamma)-1$ and $\Gamma-2=\lg 15, 345 \rg$ is not pure. So by the previous lemma, neither 5 is a
shedding vertex of $\Gamma$ nor 2. Also $\Gamma^\vee$ which is indeed isomorphic to $\Gamma$ is vertex
decomposable. Now assume that $\Delta=\lg 12,34 \rg$ which is a 1-dimensional simplicial complex. Then
$\Delta-1=\lg 2, 34\rg$ is not pure, so 1 is not a shedding vertex of $\Delta$. Similarly we see that
$\Delta$ has no shedding vertex and is not vertex decomposable.
\end{ex}

It is well-known that if $\Delta$ is pure and vertex decomposable, then it is shellable and Cohen-Macaulay (see
\cite{provan}), hence $I_{\Delta^\vee}$ has a linear resolution and in fact linear quotients. So if $\cc$ is the
clutter with $I(\b \cc)=I_{\Delta^\vee}$ and if statement \ref{B} is true, $\cc$ should be chordal. In this section, we
prove that this is indeed the case. By \ref{transition}, in the above situation we have $\Delta= (\Delta(\cc)) ^\vee=
\lg \cc ^\vee \rg$. First we need some lemmas. In the sequel, we assume that $\cc$ is a $d$-clutter and $|\V(\cc)|=n$.
Also recall that the \emph{pure $i$-skeleton} of a simplicial complex $\Delta$, is the simplicial complex whose facets
are $i$-dimensional faces of $\Delta$.

\begin{lem}\label{dual of F(link)}
Let $\cc$ be a $d$-clutter. Suppose that $\Delta=\lg \cc \rg$ and $\Gamma=\lg \cc^\vee \rg$. Also assume that $v\in
\V(\Gamma)$ ($=\V(\Delta)$) and $\Gamma-v$ is pure of dimension $=\dim(\Gamma)$ (that is, $v$ is a shedding vertex of
$\Gamma$) and set $\cd= \F(\link_\Delta(v))$. Then
\begin{enumerate}
\item \label{dual of F1} $\link_\Delta(v)$ is the pure $(d-1)$-skeleton of $(\Gamma-v)^\vee$;
\item \label{dual of F2} $\lg \cd^\vee \rg= \Gamma-v$;
\item \label{dual of F3} $e\in \sms(\cd)$ \ifof $ev\in \sms(\cc)$.
\end{enumerate}
\end{lem}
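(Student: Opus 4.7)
The plan is to exploit the Alexander-dual dictionary between $\cc$ and $\Gamma$, using the shedding hypothesis (via the preceding lemma) to secure purity of $\Gamma-v$ at dimension $n-d-2$. A consequence of this purity is that the facets of $\Gamma-v$ are exactly those facets of $\Gamma$ that avoid $v$, namely $\{V\sm F' : F'\in \b\cc,\, v\in F'\}$. All three parts will be read off from this description.

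For (i), I would compare top-dimensional faces on the two sides. For a $d$-subset $e\se V\sm v$ the complement $(V\sm v)\sm e$ has the cardinality of a facet of $\Gamma-v$, so it is a face of $\Gamma-v$ iff it is a facet, iff $ev\in \b\cc$. Hence the $(d-1)$-dimensional faces of $(\Gamma-v)^\vee$ are precisely $\{e : ev\in \cc\}$, which is exactly the set of facets of $\link_\Delta(v)$. Since both $\link_\Delta(v)$ and the pure $(d-1)$-skeleton of $(\Gamma-v)^\vee$ are pure $(d-1)$-dimensional complexes sharing their top faces, they coincide.

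For (ii), the same translation yields $\b\cd = \{e\se V\sm v : |e|=d,\, ev\in \b\cc\}$, so $\cd^\vee = \{V\sm F' : F'=ev\in \b\cc,\, v\in F'\}$, which is exactly the facet set of $\Gamma-v$ identified above; hence $\lg \cd^\vee \rg = \Gamma-v$.

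For (iii), the equivalence $e\in \MS(\cd) \iff ev\in \MS(\cc)$ and the identity $\N_\cc[ev] = \N_\cd[e]\cup\{v\}$ both follow by unwinding the definitions. What remains is the equivalence $A := \N_\cd[e]$ is a clique of $\cd$ iff $A\cup\{v\}$ is a clique of $\cc$. One direction is immediate: if $A\cup\{v\}$ is a clique of $\cc$, then every $d$-subset $B\se A$ satisfies $Bv\in \cc$, so $B\in \cd$. The converse is the main obstacle and the only place the shedding hypothesis is really needed: assuming $A$ is a clique of $\cd$, I must show that every $(d+1)$-subset $F\se A$ lies in $\cc$. Suppose instead $F\in \b\cc$; then $V\sm F$ is a facet of $\Gamma$ containing $v$ (since $v\notin A$), so $V\sm (Fv)$ is a face of $\Gamma-v$ of dimension $n-d-3$. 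By purity it sits inside some facet $V\sm F'$ of $\Gamma-v$, forcing $F'=(F\sm\{x\})\cup\{v\}$ for some $x\in F$. But $F\sm\{x\}$ is a $d$-subset of $A$, hence lies in $\cd$ by the clique hypothesis on $A$, giving $F'\in \cc$, contradicting $F'\in \b\cc$.
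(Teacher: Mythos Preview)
Your argument is correct and follows essentially the same route as the paper's proof: parts \ref{dual of F1} and \ref{dual of F2} are obtained by the same Alexander-dual complement calculation, and for part \ref{dual of F3} you reproduce the paper's key step---passing from a hypothetical $F\in\b\cc$ with $F\se \N_\cd[e]$ to a facet of $\Gamma-v$ via purity, and then contradicting the clique property of $\N_\cd[e]$. The only cosmetic slip is the parenthetical ``since $v\notin A$'' (you mean $v\notin F$, which follows from $F\se A\se \V(\cd)$); otherwise the proofs match.
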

\begin{proof}
\ref{dual of F1} Suppose that $F\in (\Gamma-v)^\vee$ and $\dim F=d-1$. Then $A=\V(\Gamma-v)\sm F\notin
\Gamma-v$ and hence $A\notin \Gamma$ and $A\notin \cc^\vee$. But $\dim (A)=(n-1)-\dim (F)-2= n-d-2= \dim
(\Gamma)=\dim (\cc^\vee)$. Thus $Fv= \V(\Delta)\sm A\in \cc$, that is, $F$ is a facet of $\link_\Delta(v)$.
So the pure $(d-1)$-skeleton of $(\Gamma-v)^\vee$ is contained in $\link_\Delta(v)$. The proof of the reverse
inclusion is similar.

\ref{dual of F2} Noting that $(\F(\Gamma-v))^\vee$ is exactly the set of $(d-1)$-dimensional faces of
$(\Gamma-v)^\vee$, we see that part \ref{dual of F1} indeed states $\cd= (\F(\Gamma-v))^\vee$, which is
equivalent to \ref{dual of F2}.

\ref{dual of F3} Assume that $e\in \MS(\cd)$. Then for $x\in \V(\cd)\sm e$ we have $ex\in \cd \iff exv\in
\F(\Delta)=\cc$. Therefore, $\N_\cd[e]=\N_\cc[ev]\sm \{v\}$. If $ev\in \sms(\cc)$ and $A$ is a
$(d-1)$-dimensional subset of $\N_\cd[e]$, then $Av\se \N_\cc[ev]$ which is a clique and hence $Av\in \cc$,
that is, $A\in \cd$. So $\N_\cd[e]$ is a clique and $e\in \sms(\cc)$.

Conversely, assume that $e\in \sms(\cd)$ and $A$ is a $d$-dimensional subset of $\N_\cc[ev]$. We must show
$A\in \cc$. If $v\in A$, then $A\sm \{v\}\se \N_\cd[e]$ and hence $A\sm\{v\} \in \cd$, which means $A\in
\cc$. Thus suppose $v\notin A$. If $A\notin \cc$, then $v\in B=\V(\cc)\sm A\in \cc^\vee=\F(\Gamma)$.
Consequently, $B\sm\{v\}\in \Gamma-v$ and there is a $F\in \F(\Gamma-v)=\cd^\vee$ containing $B\sm\{v\}$. So
$A'= \V(\cd)\sm F\notin \cd$. Since $\dim(\Gamma-v)=\dim \Gamma= n-d-2= |F|-1$ and $\V(\cd)=\V(\cc)\sm
\{v\}$, we see that $|A'|=d$. Also $A'\se A\se\N_\cc[ev]\sm \{v\}=\N_\cd[e]$, and as $e$ is simplicial, we
get $A'\in \cd$, a contradiction from which the result follows.
\end{proof}

\begin{ex}\label{vdec-ex}
Suppose that $\cc$ is a 2-clutter on $[6]$ with circuits 123, 124, 134, 234, 345, 346, 126 and $\Delta$,
$\Gamma$ and $\cd$ are defined as in \ref{dual of F(link)}. Then
\begin{align}
 \Gamma= \lg \cc^\vee \rg  =&\lg  136, 146, 236, 246, 346,
  135, 235, 145, 245,
  123, 124,  134,  234 \rg \  \hbox{and} \cr
\Gamma-6=& \lg 135, 235, 145, 245,
  123, 124,  134,  234 \rg.
\end{align}
So 6 is a shedding vertex of $\Gamma$. Note that $\F(\Gamma-6)$ is a 3-clutter on $[5]$, thus
$(\Gamma-6)^\vee = \lg 12, 34 \rg$ which is exactly $\link_\Delta(6)$. Also $\cd= \{12,34\}$ is indeed a
graph and $\sms(\cd)=\{1,2,3,4\}$. As claimed in the previous lemma, simplicial maximal subcircuits of $\cc$
which contain the vertex 6 are $16, 26, 36,46$.
\end{ex}

\begin{lem}\label{shedding stable}
Let $\cc$ be a $d$-clutter. Assume that $\Delta=\lg \cc \rg$ and $\Gamma=\lg \cc^\vee \rg$. Let $v$ be a shedding
vertex of $\Gamma$ and $v\in e\in \MS(\cc)$. Then $v$ is a shedding vertex of $\lg (\cc-e)^\vee \rg=
(\Delta(\cc-e))^\vee$. Furthermore, if $\Delta'=\lg \cc-e \rg$, then $\F(\link_{\Delta'} v)= \F(\link_\Delta v)-e'$,
where $e'=e\sm\{v\}$.
\end{lem}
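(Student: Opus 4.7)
The plan is to handle the two assertions separately. For the shedding claim, I would invoke the characterization of shedding vertices of pure complexes via purity of the deletion (the first lemma of this section), so the goal becomes showing that $\Gamma':=\lg(\cc-e)^\vee\rg$ is pure of dimension $n-d-2$ and that $\Gamma'-v$ is pure of the same dimension. First I would compute $(\cc-e)^\vee$ explicitly: since $\cc-e$ drops exactly those $d$-circuits of $\cc$ that contain $e$, namely the sets $ew$ with $w\in V\sm e$ and $ew\in\cc$, the Alexander dual gains the facets $V\sm ew$ on top of $\cc^\vee$. Each such set has size $n-d-1$, so $\Gamma'$ stays pure of dimension $n-d-2=\dim\Gamma$. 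The decisive observation, and the only place the hypothesis $v\in e$ is used, is that each new facet $V\sm ew$ avoids $v$.

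Next I would show $\Gamma'-v$ is pure of dimension $n-d-2$ by a face-extension argument: given any face $F$ of $\Gamma'-v$, extend it to a facet $H$ of $\Gamma'$. If $H\in\cc^\vee$, then $F\in\Gamma-v$, and the shedding of $v$ in $\Gamma$ (again via the characterization lemma) extends $F$ to a facet of $\Gamma-v$ of dimension $n-d-2$, which remains a face of $\Gamma'-v$. If instead $H=V\sm ew$ is a new facet, then $H$ itself is a $v$-free face of dimension $n-d-2$ containing $F$. Either way $F$ sits inside an $(n-d-2)$-face of $\Gamma'-v$, and the characterization lemma concludes that $v$ is a shedding vertex of $\Gamma'$.

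For the link identity, I would use that $\Delta$ and $\Delta'$ are pure of dimension $d$, so $\F(\link_\Delta v)=\{F\sm\{v\} : v\in F\in\cc\}$ and $\F(\link_{\Delta'}v)=\{F\sm\{v\} : v\in F\in\cc-e\}$. For $G=F\sm\{v\}$ with $v\in F$, the identities $e=e'\cup\{v\}$ and $v\notin e'$ give $e'\se G \iff e'\se F \iff e\se F$; hence deleting from $\F(\link_\Delta v)$ those facets that contain $e'$ amounts to imposing $e\not\se F$, i.e., $F\in\cc-e$. The mild obstacle throughout is tracking which side of the Alexander dual one is on; once the new facets of $\Gamma'$ are pinned down and seen to miss $v$, the rest is routine bookkeeping.
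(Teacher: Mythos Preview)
Your argument is correct. The route differs only superficially from the paper's: you invoke the purity characterization of shedding vertices (the lemma immediately preceding this one) and explicitly describe $(\cc-e)^\vee$ as $\cc^\vee$ together with the new facets $V\sm ew$, while the paper works with the direct definition of a shedding vertex and argues via the Alexander dual that any facet $F$ of $\link_{\Gamma'} v$ already lies in $\link_\Gamma v$. Both proofs pivot on exactly the same observation---that $v\in e$ forces the new facets $V\sm ew$ to miss $v$ (equivalently, that $Fv\in(\cc-e)^\vee$ with $v\in Fv$ forces $Fv\in\cc^\vee$)---and both then reduce to the shedding of $v$ in $\Gamma$. Your explicit facet description is a bit more concrete; the paper's version is slightly shorter since it only needs to examine faces of $\link_{\Gamma'} v$ rather than all faces of $\Gamma'-v$. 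For the ``furthermore'' statement the paper leaves the verification to the reader, and your bijection $F\leftrightarrow F\sm\{v\}$ together with $e'\se F\sm\{v\}\iff e\se F$ is exactly what is needed.
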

\begin{proof}
Let $\Gamma'=\lg (\cc-e)^\vee \rg$ and suppose that $F\in \F(\link_{\Gamma'} v)$. We have to show that $F$ is not a
facet of $\Gamma'-v$. Note that $\cc^\vee  \se (\cc-e)^\vee$ and hence $\Gamma-v\se \Gamma'-v$. Therefore, it suffices
to show that there is a $G\in \F(\Gamma)=\cc^\vee$ with $v\notin G$ such that $F\su G$. We know that $Fv\in
\F(\Gamma')=(\cc-e)^\vee$, so $\V\sm Fv\notin \cc-e$ where  $\V=\V(\cc-e)=\V(\cc)$. As $v\in e$, $e\not\se \V\sm Fv$
and thus $\V\sm Fv\notin \cc$. It follows that $Fv\in \Gamma$, that is, $F\in \link_\Gamma v$. No face of $\link_\Gamma
v$ is a facet of $\Gamma-v$, because $v$ is a shedding vertex of $\Gamma$. Consequently, $F$ is strictly contained in
the facet $G$ of $\Gamma-v$, as claimed. The proof of the ``furthermore'' statement, which follows from definitions, is
left to the reader.
\end{proof}

\begin{ex}\label{vdec-ex2}
Let $\cc, \Delta, \Gamma$ and $v=6$ be as in Example \oldref{vdec-ex} and set $e=26$, $\Gamma'=\lg
(\cc-e)^\vee \rg$ and $\Delta'=\lg \cc-e\rg$. Then $\cc-e=\cc\sm \{126\}$ and $(\cc-e)^\vee= \cc^\vee \cup
\{345\}$ and the facets of $\Gamma'$ (resp. $\Gamma'-6$) are obtained by adding the face 345 to the set of
facets of $\Gamma$ (resp. $\Gamma-6$). Thus $\Gamma'-6$ is pure and of dimension 2 and hence 6 is still a
shedding vertex in $\Gamma'$. Also the only facet of $\link_\Delta(6)$ which does not contain the vertex $2=
e\sm\{v\}$ is 34 and $\link_{\Delta'}(6)=\lg 34 \rg $.
\end{ex}

Suppose that $v\in\V(\cc)$ has not appeared in any circuit of $\cc$. Then chordality (and many other properties) of
$\cc$ and $\cc-v$ are equivalent, although $\V(\cc)\neq \V(\cc-v)$. In this case, we misuse the notation and write
$\cc=\cc-v$.

\begin{lem}\label{ver del}
Let $\cc$ be a $d$-clutter. Assume that $\Delta=\lg \cc \rg$, $v$ is a shedding vertex of $\Gamma=\lg \cc^\vee \rg$ and
$\cd=\F(\link_\Delta v)$. If $\cd$ is chordal, then there is a sequence $e_1, \ldots, e_t$ with $e_i\in
\sms(\cc_{i-1})$, where $\cc_0=\cc$ and $\cc_i=\cc_{i-1}-e_i$, such that $\cc_t=\cc-v$.
\end{lem}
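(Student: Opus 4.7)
The plan is to lift a chordal sequence for $\cd$ to a sequence of simplicial-maximal-subcircuit removals in $\cc$ by appending the vertex $v$ to each step. Since $\cd$ is chordal, there exist $f_1,\ldots, f_t$ with $f_i\in \sms(\cd_{i-1})$ (where $\cd_0=\cd$ and $\cd_i=\cd_{i-1}-f_i$) such that $\cd_t$ has no circuits. I will set $e_i:=f_i v$ and show by induction on $i$ the joint invariant: $v$ is a shedding vertex of $\lg \cc_{i-1}^\vee\rg$, one has $\F(\link_{\lg \cc_{i-1}\rg}(v))=\cd_{i-1}$, and $e_i\in \sms(\cc_{i-1})$.

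The base case is immediate: by the hypotheses the first two conditions hold for $\cc_0=\cc$, and then Lemma \ref{dual of F(link)}\ref{dual of F3} translates $f_1\in \sms(\cd)$ into $e_1=f_1 v\in \sms(\cc)$. For the inductive step, I apply Lemma \ref{shedding stable} to $\cc_{i-1}$ with the maximal subcircuit $e_i=f_i v\in \MS(\cc_{i-1})$: this simultaneously yields that $v$ is a shedding vertex of $\lg \cc_i^\vee\rg$ and, via its ``furthermore'' clause, that
$$\F(\link_{\lg \cc_i\rg}(v))=\F(\link_{\lg \cc_{i-1}\rg}(v))-f_i=\cd_{i-1}-f_i=\cd_i.$$
Lemma \ref{dual of F(link)}\ref{dual of F3} applied to $\cc_i$ then turns $f_{i+1}\in \sms(\cd_i)$ into $e_{i+1}\in \sms(\cc_i)$, closing the induction.

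It remains to verify that $\cc_t=\cc-v$. The circuits of $\cc_t$ that do not contain $v$ are exactly the circuits of $\cc$ not containing $v$, because every deletion $\cc_{i-1}\to\cc_i$ removes only circuits containing $e_i=f_i v$, hence containing $v$. Conversely, if some circuit $F\in \cc_t$ contained $v$, then $F\sm\{v\}$ would be a facet of $\link_{\lg \cc_t\rg}(v)$, contradicting $\F(\link_{\lg \cc_t\rg}(v))=\cd_t=\tohi$. Under the identification explained just before the lemma's statement, we conclude $\cc_t=\cc-v$. The only delicate point in the argument is the bookkeeping in the inductive step, namely that $v$ remains a shedding vertex and that $\F(\link_{\lg \cc_i\rg}(v))$ tracks $\cd_i$ exactly; but both are precisely what Lemma \ref{shedding stable} is designed to deliver, so no serious obstacle is expected.
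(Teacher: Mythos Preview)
Your proof is correct and follows essentially the same approach as the paper: both lift a chordal sequence for $\cd$ to $\cc$ by appending $v$, using Lemma \oldref{dual of F(link)}\ref{dual of F3} to transfer simpliciality and Lemma \oldref{shedding stable} to maintain the shedding-vertex and link-facet invariants. The only cosmetic difference is that the paper phrases this as induction on $|\cc|$ while you unroll it into an explicit induction on the index $i$ of the sequence.
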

\begin{proof}
We prove the statement by induction on $|\cc|$. If $\cd=\tohi$, then $\cc=\cc-v$ and the claim holds trivially. So
assume $\cd\neq \tohi$ and $e'\in \sms(\cd)$ be such that $\cd-e$ is chordal. By \ref{dual of F(link)}, $e=e'v\in
\sms(\cc)$. Let $\Delta'=\lg \cc-e \rg$ and $\cd'=\F(\link_{\Delta'}v)$, then according to \ref{shedding stable},
$\cd'=\cd-e'$ is chordal and $v$ is a shedding vertex of $\lg (\cc-e)^\vee \rg$. Thus by applying the induction
hypothesis on $\cc-e$, the assertion follows.
\end{proof}

\begin{ex}\label{vdec-ex3}
Let's use the notations of Examples \oldref{vdec-ex} and \oldref{vdec-ex2}. We saw that $\cd=\{12, 34\}$,
which is a chordal graph with $2\in \sms(\cd)$ and $4\in \sms(\cd-2)$. Now we see that $26\in \sms(\cc)$ and
$46\in \sms(\cc-26)$ and in $\cc-26-46$ the vertex 6 has not appeared in any circuit. In our notations, this
means that $\cc-26-46=\cc-6$, as asserted in the previous lemma.
\end{ex}

\begin{lem}\label{almost complete}
Suppose that $\cc$ is obtained from a complete clutter by deleting exactly one circuit. Then $\cc$ is chordal.
\end{lem}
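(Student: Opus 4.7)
The plan is to induct on the dimension $d$, using an auxiliary ``link'' construction, with a parallel argument handling the complete $d$-clutter as needed. For the base case $d = 1$, the clutter $\cc$ is the complete graph $K_n$ minus a single edge $F_0$, which is a chordal graph: any induced cycle of length $\geq 4$ would have the two endpoints of $F_0$ as non-adjacent vertices, but then any pair of vertices at cyclic distance two on the cycle is connected in $\cc$ (since no other edge is missing), giving a chord and contradicting inducedness.

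For the inductive step $d \geq 2$, fix $v_0 \in F_0$ and consider the auxiliary $(d-1)$-clutter $\cc^{(v_0)} := \{F \sm \{v_0\} : v_0 \in F \in \cc\}$ on $V \sm \{v_0\}$. A direct check gives $\overline{\cc^{(v_0)}} = \{F_0 \sm \{v_0\}\}$, so $\cc^{(v_0)}$ is itself almost-complete of dimension $d-1$ and is chordal by the outer inductive hypothesis; fix a witnessing SMS sequence $e_1^*, \dots, e_t^*$ for $\cc^{(v_0)}$ and lift to $e_i := e_i^* \cup \{v_0\}$.

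The key claim is that $e_1, \dots, e_t$ is a valid SMS sequence for $\cc$ reducing it to $\cc - v_0$, which equals the complete $d$-clutter on $V \sm \{v_0\}$ because $v_0 \in F_0$. To prove this, I would maintain by induction on $i$ the invariant ``every non-circuit of the current clutter $\cc_{i-1}$ contains $v_0$'', which is preserved since each $e_j \ni v_0$ forces every newly-removed circuit to contain $v_0$. Under this invariant, a direct computation gives the neighborhood identity $\N_{\cc_{i-1}}[e] = \N_{\cc_{i-1}^{(v_0)}}[e \sm \{v_0\}] \cup \{v_0\}$ for any $(d-1)$-subset $e \ni v_0$, and $\N_{\cc_{i-1}}[e]$ is a clique in $\cc_{i-1}$ if and only if $\N_{\cc_{i-1}^{(v_0)}}[e \sm \{v_0\}]$ is a clique in $\cc_{i-1}^{(v_0)}$---the essential ingredient is that $d$-subsets of the neighborhood that avoid $v_0$ are automatically circuits of $\cc_{i-1}$ by the invariant, so the clique check collapses to one in the link. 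Combined with $\cc_{i-1}^{(v_0)} = \cc^{(v_0)} - e_1^* - \cdots - e_{i-1}^*$ (verified by the same induction), this gives $e_i \in \sms(\cc_{i-1})$.

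After the $t$ lifted steps, $\cc_t^{(v_0)} = \tohi$, forcing $\cc_t = \cc - v_0$; chordality of this complete $d$-clutter is obtained by the same link-based argument (the invariant is vacuously satisfied), and concatenating the two SMS sequences gives a chordal sequence for $\cc$. The main obstacle is the precise verification of the SMS correspondence between $\cc_{i-1}$ and $\cc_{i-1}^{(v_0)}$ under the invariant; once this bookkeeping is complete, the remainder is routine induction.
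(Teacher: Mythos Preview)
Your argument is correct but takes a substantially different route from the paper's. The paper gives a four-line proof: it cites the known fact that the complete clutter $\cc_0$ is chordal (\cite[Corollary 3.11]{chordal}), observes that by the symmetry of $\cc_0$ the first term of a witnessing SMS sequence can be taken to be any $e\se F_0$ with $|e|=d$, checks directly that $e\in\sms(\cc)$ (since $\N_\cc[e]=V\sm(F_0\sm e)$, which avoids $F_0$ and is therefore a clique), and concludes because $\cc-e=\cc_0-e$ is chordal by the tail of the sequence for $\cc_0$. Your link-based induction on $d$ is longer but has the virtue of being self-contained: you reprove chordality of complete clutters in parallel rather than importing it. Two small bookkeeping points you should make explicit: your ``same link-based argument'' for the complete $d$-clutter $\cc-v_0$ is really a secondary induction on the number of vertices (each pass through the link drops one vertex while keeping the dimension), and your base case $d=1$ should also record that $K_n$ itself is chordal, since the parallel track for complete clutters needs it. Both additions are routine.
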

\begin{proof}
Let $\cc=\cc_0-F$, where $\cc_0$ is a complete clutter and $F\in \cc_0$. By \cite[Corollary 3.11]{chordal}, $\cc_0$ is
chordal and there is a sequence of simplicial maximal subcircuits $e_1,\ldots, e_t$ which sends $\cc_0$ to the empty
clutter. By symmetry of $\cc_0$, $e_1$ can be any maximal subcircuit of $\cc_0$ and we can assume $e_1\se F$. Hence
$\cc-e= \cc_0-e$ is chordal and it is easy to see that $e\in \sms(\cc)$, that is, $\cc$ is also chordal.
\end{proof}

Now we are ready to state the main result of this section.
\begin{thm}\label{vdec main}
Assume that $\cc$ is a $d$-clutter and $\lg \cc^\vee \rg$ is vertex decomposable. Then $\cc$ is chordal.
\end{thm}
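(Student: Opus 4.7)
The plan is to proceed by induction on $n=|\V(\cc)|$, assembling the preceding lemmas. Let $\Delta=\lg \cc \rg$ and $\Gamma=\lg \cc^\vee \rg$; the hypothesis says $\Gamma$ is vertex decomposable, and since $\cc^\vee$ is uniform, $\Gamma$ is automatically pure. The base case (very small $n$, or when $\cc$ is empty or complete) is immediate.

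For the inductive step, I would first dispose of the trivial case where $\Gamma$ is a simplex: this means $|\cc^\vee|=1$, hence $|\b\cc|=1$, so $\cc$ is a complete clutter minus a single circuit, and \ref{almost complete} applies. Otherwise, vertex decomposability supplies a shedding vertex $v$ such that both $\Gamma-v$ and $\link_\Gamma v$ are vertex decomposable. Set $\cd=\F(\link_\Delta v)$. By \ref{dual of F(link)}(ii), $\lg \cd^\vee \rg=\Gamma-v$ is vertex decomposable, and since $\cd$ is a $(d-1)$-clutter on $\V(\cc)\sm\{v\}$, i.e.\ on $n-1$ vertices, the induction hypothesis yields that $\cd$ is chordal. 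Feeding this into \ref{ver del} produces a sequence $e_1,\ldots,e_t$ of simplicial maximal subcircuits (in the successive clutters) that reduces $\cc$ to $\cc-v$.

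It then remains to show that $\cc-v$, regarded as a $d$-clutter on $\V(\cc)\sm\{v\}$, is chordal. The key identification is
$$\lg (\cc-v)^\vee \rg \;=\; \link_\Gamma v,$$
which I would verify directly from definitions: a facet of $\link_\Gamma v$ has the form $F\sm\{v\}$ with $F\in \cc^\vee$ and $v\in F$; writing $F=\V(\cc)\sm G$ with $G\in \b\cc$ and $v\notin G$ identifies $F\sm\{v\}$ with $(\V(\cc)\sm\{v\})\sm G$, i.e.\ an element of $(\cc-v)^\vee$, and conversely. Once this is in hand, $\lg (\cc-v)^\vee \rg$ is vertex decomposable by assumption on $v$, so the induction hypothesis applied to the $d$-clutter $\cc-v$ on $n-1$ vertices gives that $\cc-v$ is chordal. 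Concatenating the sequence $e_1,\ldots,e_t$ with a chordal reduction sequence for $\cc-v$ (legitimate because after removing the $e_i$ the vertex $v$ no longer appears in any circuit, so the ambient clutter and $\cc-v$ have the same simplicial maximal subcircuits) yields a chordal elimination sequence for $\cc$.

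The main obstacle is arranging the two parallel inductive calls cleanly: one on $\cd$ to feed \ref{ver del}, and one on $\cc-v$ to finish. Both need the induction hypothesis phrased for arbitrary uniform clutters with strictly fewer vertices (not for fixed $d$), and one has to be careful that \ref{ver del} indeed delivers a chain of true simplicial maximal subcircuits of the evolving clutters (which is exactly what that lemma guarantees) and that $v$ becomes isolated after the $t$ removals, so that the reduction of $\cc-v$ can be spliced on without conflict.
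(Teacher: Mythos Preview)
Your proposal is correct and follows essentially the same approach as the paper: induction on the number of vertices, handling the simplex case via \ref{almost complete}, then using the shedding vertex $v$ together with \ref{dual of F(link)} and \ref{ver del} to reduce $\cc$ to $\cc-v$, and finishing via the identification $\lg (\cc-v)^\vee \rg=\link_\Gamma v$. Your added remark that the induction must be over all uniform clutters (not just those of fixed dimension $d$, since $\cd$ is a $(d-1)$-clutter) is a useful clarification that the paper leaves implicit.
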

\begin{proof}
We use induction on the number of vertices of $\cc$. Let $\Delta=\cc$ and $\Gamma= \lg \cc^\vee \rg$. If
$\Gamma$ is a simplex, then $\cc$ is obtained from a complete clutter by deleting one circuit and by
\ref{almost complete} is chordal. Thus assume that $\Gamma$ is not a simplex. So there is a shedding vertex
$v$ of $\Gamma$ such that both $\Gamma-v$ and $\link_\Gamma v$ are vertex decomposable. Setting
$\cd=\F(\link_\Delta v)$, it follows from \ref{dual of F(link)} that $\lg \cd^\vee \rg= \Gamma-v$ and is
vertex decomposable. Therefore, $\cd$ is chordal by induction hypothesis and by \ref{ver del}, there is a
sequence $e_1, \ldots, e_t$ with $e_i\in \sms(\cc_{i-1})$, where $\cc_0=\cc$ and $\cc_i=\cc_{i-1}-e_i$, such
that $\cc_t=\cc-v$. Consequently, we just need to show that $\cc-v$ is chordal. For this we show that $\lg
(\cc-v)^\vee \rg=\link_\Gamma v$, which is vertex decomposable and the result follows by the induction
hypothesis:
\begin{align}
F\in \F(\link_\Gamma v) & \iff Fv\in \F(\Gamma)=\cc^\vee \iff \V\sm Fv\notin \cc \cr
& \iff (\V\sm \{v\})\sm F\notin \cc-v \iff F\in (\cc-v)^\vee.
\end{align}

\end{proof}

\begin{ex}\label{vdec-ex4}
With notations as in Example \oldref{vdec-ex3}, we saw that $\cc-26-46=\cc-6$ and $26\in \sms(\cc)$ and
$46\in \sms(\cc-26)$. Now in $\lg (\cc-6)^\vee \rg= \lg 13,14,23,24,34 \rg$ the vertices $1,2,3,4$ are
shedding vertices. Applying \ref{ver del} with $\cc-6$ instead of $\cc$ and with $v=1$, we find for example
$12\in \sms(\cc-6)$ and $13\in \sms(\cc-6-12)$ with $\cd=(\cc-6)-12-13= \cc-6-1= \{234,345\}$. Applying
\ref{ver del} again we can find say $23\in \sms(\cd)$ and finally $35\in \sms(\cd-23)$ to get that $\cc$ is
chordal.
\end{ex}

In \cite{w-chordal}, a vertex of a not necessarily uniform clutter, $\cd$ is called a \emph{simplicial vertex} if for
every $e_1,e_2\in \cd$ with $v\in e_1,e_2$, there is an $e_3\in \cd$ such that $e_3\se (e_1\cup e_2)\sm \{v\}$. Also
the \emph{contraction} $\cd/v$ is defined as the clutter of minimal sets of $\{e\sm \{v\}| e\in \cd\}$. Now if every
clutter obtained from $\cd$ by a sequence of deletions or contractions of vertices has a simplicial vertex, then
Woodroofe in \cite{w-chordal} calls $\cd$ chordal and we call it \emph{W-chordal}. In \cite[Corollary 3.7]{chordal} it
is proved that every W-chordal clutter which is uniform  is chordal. As a corollary of the above theorem, we can get
the following slightly stronger version of Corollary 3.7 of \cite{chordal}.

\begin{cor}\label{W-chordal}
Suppose that $\cd$ is a (not necessarily uniform) W-chordal clutter, $d=\min \l\{|F|\big| F\in \cd \r\}$ and $\cc=\l\{
F\in \cd \big| |F|=d \r\}$. Then $\cc$ is chordal.
\end{cor}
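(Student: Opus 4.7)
\proof
The plan is to invoke Theorem \ref{vdec main}, reducing the task to showing that $\Gamma:=\lg\cc^\vee\rg$ is vertex decomposable. I would argue by induction on $|\V(\cd)|$. The degenerate cases (such as $\cd=\tohi$, $d=1$, or $\Gamma$ a simplex or pure skeleton of a simplex) are handled directly, since then $\Gamma$ is visibly vertex decomposable and Theorem \ref{vdec main} applies.

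For the inductive step with $d\geq 2$, pick a simplicial vertex $v$ of $\cd$ (which exists by W-chordality) and first verify that $v$ is a shedding vertex of $\Gamma$. A direct analysis of the facets of $\Gamma-v$ reduces this to showing that for every $E\in\b\cc$ with $v\notin E$, some $u\in E$ satisfies $(E\sm\{u\})\cup\{v\}\in\b\cc$. I would argue by contradiction: if every such set lay in $\cc$, then picking distinct $u_1,u_2\in E$ (possible since $|E|=d\geq 2$) and applying the simplicial property of $v$ to the edges $e_i:=(E\sm\{u_i\})\cup\{v\}\in\cc\se\cd$, both containing $v$, would yield some $e_3\in\cd$ with $e_3\se(e_1\cup e_2)\sm\{v\}=E$. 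Since $|e_3|\geq d=|E|$, this forces $e_3=E$, so $E\in\cc$, contradicting $E\in\b\cc$.

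The next step is to identify $\Gamma-v$ and $\link_\Gamma v$ with complexes arising from the W-chordal minors of $\cd$. By Lemma \ref{dual of F(link)}, $\Gamma-v=\lg\cc_1^\vee\rg$ with $\cc_1=\F(\link_{\lg\cc\rg}v)=\{e\sm\{v\}\,:\,v\in e\in\cc\}$; a short computation shows that $\cc_1$ is either the set of minimum-size edges of the W-chordal minor $\cd/v$ or is empty. Similarly, the final display in the proof of Theorem \ref{vdec main} gives $\link_\Gamma v=\lg(\cc-v)^\vee\rg$, and analogously $\cc-v$ is either the minimum-size edges of $\cd-v$ or empty. When nonempty, the induction hypothesis (applied to a W-chordal minor on fewer vertices) delivers vertex decomposability; when empty, $\lg\tohi^\vee\rg$ is a pure skeleton of a simplex on $\V(\cd)\sm\{v\}$, hence vertex decomposable. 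Combined with shedding at $v$, this gives that $\Gamma$ is vertex decomposable, and Theorem \ref{vdec main} yields chordality of $\cc$.

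The main obstacle is the case analysis at the deletion and link identifications, particularly when the minimum edge size of $\cd/v$ or $\cd-v$ strictly exceeds $d$: in those corner cases the candidate subclutter is empty, but the associated dual complex reduces to a pure skeleton of a simplex (which is known to be vertex decomposable), so the induction proceeds.
\endproof
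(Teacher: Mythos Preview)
Your argument is correct and essentially self-contained, but it differs from the paper's proof in a substantial way. The paper simply invokes Woodroofe's Theorem~6.9 from \cite{w-chordal} as a black box to obtain that $\lg\cc^\vee\rg=(\Delta(\cc))^\vee$ is vertex decomposable, and then applies Theorem~\oldref{vdec main}. You instead \emph{re-prove} the needed vertex decomposability directly: you take a simplicial vertex $v$ of $\cd$, verify by an elementary argument (using the defining property of a simplicial vertex together with the minimality of $d$) that $v$ is a shedding vertex of $\Gamma=\lg\cc^\vee\rg$, and then identify $\Gamma-v$ and $\link_\Gamma v$ with the dual complexes attached to the minimum-size edges of the W-chordal minors $\cd/v$ and $\cd-v$, respectively, via Lemma~\oldref{dual of F(link)} and the computation in the proof of Theorem~\oldref{vdec main}. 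This recovers, in the pure setting needed here, the content of Woodroofe's result without citing it. The trade-off is clear: the paper's proof is a two-line reduction to an external theorem, while yours is longer but keeps everything internal to the machinery already developed in Section~3; your route also makes transparent exactly how the simplicial-vertex hypothesis on $\cd$ produces the shedding vertex on the dual side.
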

\begin{proof}
Theorem 6.9 of \cite{w-chordal} states that the Alexander dual of the independence complex of $\b\cc$ (for the
definition of independence complex, see \cite[p. 3]{w-chordal}) is vertex decomposable. But independence complex of
$\b\cc$ is exactly $\Delta(\cc)$ and hence the result follows from Theorem \oldref{vdec main}.
\end{proof}

%%%%%%%%%%%%%%%%%%%%%%%%%%%%%%%%%%%%%%%%%%%%%%%%%%%%%%%%%%%%%%%%%%%%%%%%%%%%%%%%%%%%%%%%%%%%%%%%%%%%%%%%%%%%%%%%%%
                                     \section{Chordality and ascent of clutters}
%%%%%%%%%%%%%%%%%%%%%%%%%%%%%%%%%%%%%%%%%%%%%%%%%%%%%%%%%%%%%%%%%%%%%%%%%%%%%%%%%%%%%%%%%%%%%%%%%%%%%%%%%%%%%%%%%%

In \cite{CF1,CF2}, the concept of chorded simplicial complexes is defined and it is proved that $I_\Delta$ has a
$(d+1)$-linear resolution over a field of characteristic 2, \ifof  $\Delta$ is chorded and it is the clique complex of
a $d$-clutter. We briefly recall this concept. Let $\Delta$ be a simplicial complex. We say that $\Delta$ is
\emph{$d$-path connected}, when it is pure and for each pair $F$ and $G$ of $d$-faces of $\Delta$ there is a sequence
$F_0,\ldots, F_k$ of $d$-faces of $\Delta$, with $F_0=F$, $F_k=G$ and $|F_i\cap F_{i-1}|=d$. Also a pure
$d$-dimensional simplicial complex $\Delta$ is called a \emph{$d$-cycle}, when $\Delta$ is $d$-path connected and every
$(d-1)$-face of $\Delta$ is contained in an even number of $d$-faces of $\Delta$. A $d$-cycle $\Delta$ is called
\emph{face-minimal}, if there is no $d$-cycle on a strict subset of the $d$-faces of $\Delta$. Finally, we say that a
pure $d$-dimensional simplicial complex $\Delta$ is \emph{$d$-chorded}, when for every face-minimal $d$-cycle
$\Omega\se \Delta$ which is not $d$-complete, there exists a family $\{\Omega_1, \ldots,\Omega_k\}$ of $2\leq k$
$d$-cycles with each $\Omega_i\se \Delta$ such that
\def\theenumi{\alph{enumi}}
\begin{enumerate}
\item \label{a} $\Omega\se \cup_{i=1}^k \Omega_i$,
\item \label{b} each $d$-face of $\Omega$ is contained in an odd number of $\Omega_i$'s,
\item \label{c} each $d$-face in $\cup_{i=1}^k \Omega_i\sm \Omega$ is contained in an even number of $\Omega_i$'s,
\item \label{d} $\V(\Omega_i)\su \V(\Omega)$.
\end{enumerate}
\def\theenumi{\roman{enumi}}
We refer the reader to \cite[Section 3]{CF1}, for several examples of these concepts.

Suppose that $\Delta$ is the clique complex of a $d$-clutter. Connon and Faridi, first in \cite{CF1} prove that if
$I_\Delta$ has a $(d+1)$-linear resolution over a field of characteristic 2, then $\Delta$ is $d$-chorded. Then in
\cite[Theorem 18]{CF2}, the same authors show that $I_\Delta$ has a $(d+1)$-linear resolution over a field of
characteristic 2, \ifof $\Delta$ is $d$-chorded, and moreover $\Delta^{[m]}$ is $m$-chorded for all $m>d$
($\Delta^{[m]}$ is the pure $m$-skeleton of $\Delta$). Now if $I_\Delta=I(\b \cc)$ for a $d$-clutter $\cc$, then
$\Delta=\Delta(\cc)$ and hence $\F(\Delta^{[m]})$ is just the set of cliques of $\cc$ with dimension $m$. Therefore,
the aforementioned results show that to study when $I(\b\cc)$ has a linear resolution, it may be useful to consider the
higher dimensional clutters $\F(\Delta(\cc)^{[m]})$. Motivated by this observation, we define the ascent of a clutter
as follows.

\begin{defn}\label{def ascent}
Let $\cc$ be a $d$-clutter. We call the family of all  $(d+1)$-dimensional cliques of $\cc$ (that is, cliques on $d+2$
vertices), the ascent of $\cc$ and denote it by $\cc^+$. In other words, $\cc^+=\F \p{ \Delta(\cc)^{[d+1]} }$.
\end{defn}

\begin{ex}
Let $\cc$ be as in Example \oldref{counter Ex}. Then $\cd=\MS(\cc)$ is a 1-clutter (that is, a graph) on
$[8]$. The circuits of $\cd$ are the edges in \oldref{Fig-Dunce}. Now $\cd^+$ is the set of cliques of size 3
of $\cd$, in particular, all of the triangles in \oldref{Fig-Dunce} are in $\cd^+$. Thus $\cc \se \cd^+$.
Note that all of the edges 12, 13, 23, 24, 34 are in $\cd$, so $123$ and $234$ are in $\cd^+$ and $\cd^+\neq
\cc$. This shows that $\cc$ is not of the form $(\cc')^+$ for some 1-clutter $\cc'$ (else $\cc'$ should
contain $\MS(\cc)=\cd$ and $\cd^+\se \cc'^+$, a contradiction). Now $\cc^+$ means the set of all 4-subsets of
$[8]$ such as $A$, with the property that all $3$-subsets of $A$ are in $\cc$. But there is no such
$4$-subset of $[8]$ and hence $\cc^+=\tohi$. Moreover, one could check that for example
\begin{align}
 \cd^+\supsetneq & \cc \cup \{123, 23a,12a,13a|a=4,5,7,8\} \cr
 \cd^{++}= & (\cd^+)^+ \supsetneq \{123a, 136a|a=4,5,7,8\} \cr
 \cd^{+++}= & \{123ab, 136ab|ab=45,78,48\},\quad \cd^{++++}=\tohi
\end{align}
\end{ex}
Our first result, considers how the property of having a linear resolution behaves under ascension. Recall
that throughout the paper, we assume that $\cc$ is a $d$-clutter on vertex set $V$ with $|V|=n$ and $K$ is an
arbitrary field.

\begin{prop}\label{lin res +}
Let $\Delta=\Delta(\cc)$. Then $I(\b\cc)$ has a linear resolution over $K$, \ifof $I(\b{\cc^+})$ has a linear
resolution over $K$ and $\tl H_d(\Delta_W;K)=0$ for all $W\se V$.
\end{prop}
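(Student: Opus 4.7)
The plan is to apply Fröberg's theorem (Theorem stated in the excerpt) to both $\Delta = \Delta(\cc)$ and $\Delta(\cc^+)$ and then compare the resulting homological vanishing conditions. Since $\cc^+$ is a $(d+1)$-clutter, $I(\b{\cc^+}) = I_{\Delta(\cc^+)}$ having a linear resolution means a $(d+2)$-linear resolution; and as remarked just after Theorem~\ref{Fro}, the fact that $\Delta(\cc)$ and $\Delta(\cc^+)$ contain every subset of $V$ of size at most $d$ and $d+1$ respectively forces the reduced homology of any induced subcomplex to vanish below dimensions $d-1$ and $d$. Thus the two sides of the desired equivalence reduce, respectively, to the vanishing of $\tl H_i(\Delta_W;K)$ for all $W\se V$ and all $i\geq d$, and to the vanishing of $\tl H_i(\Delta(\cc^+)_W;K)$ for all $W\se V$ and all $i\geq d+1$ together with $\tl H_d(\Delta_W;K)=0$ for all $W\se V$.

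The heart of the proof is then a direct comparison of the two simplicial complexes. I would first verify the set-theoretic identity
\[
\Delta(\cc^+)\sm \Delta(\cc)=\{A\se V:|A|=d+1,\ A\notin \cc\},
\]
using that a set of size $\geq d+2$ is a clique of $\cc^+$ iff it is a clique of $\cc$, and that sets of size $d+1$ are cliques of $\cc$ iff they are circuits of $\cc$. Consequently, for any $W\se V$, the complex $\Delta(\cc^+)_W$ is obtained from $\Delta_W$ by adjoining only $d$-dimensional faces.

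From this it follows that the augmented oriented chain complexes of $\Delta_W$ and $\Delta(\cc^+)_W$ coincide in all dimensions $\geq d+1$, and the boundary maps $\rnd_i$ for $i\geq d+2$ are identical; moreover, $\ker \rnd_{d+1}$ is unchanged because adjoining new generators to $C_d$ does not affect when a $(d+1)$-chain boundary is zero. Hence
\[
\tl H_i(\Delta_W;K)=\tl H_i(\Delta(\cc^+)_W;K)\quad\text{for all } i\geq d+1 \text{ and all } W\se V.
\]
Combining this identification with the two reformulations from the first paragraph yields the equivalence in the statement.

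The only mildly subtle step is the chain-level comparison, in particular checking that the kernel of $\rnd_{d+1}$ is genuinely the same in both complexes despite the codomain $C_d$ being enlarged; this is immediate since vanishing of a $d$-chain is coordinate-wise. Everything else is a routine bookkeeping of which dimensions are automatically acyclic and an application of Fröberg's theorem.
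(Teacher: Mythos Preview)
Your proposal is correct and follows essentially the same approach as the paper: the paper's proof also observes that $\Delta(\cc^+)_W$ and $\Delta(\cc)_W$ differ only in $d$-dimensional faces, deduces $\tl H_t(\Delta(\cc^+)_W;K)=\tl H_t(\Delta(\cc)_W;K)$ for all $t>d$, and then invokes Fr\"oberg's theorem. Your version simply spells out more of the bookkeeping (the explicit description of $\Delta(\cc^+)\sm\Delta(\cc)$ and the chain-level comparison for $\ker\rnd_{d+1}$), but the argument is the same.
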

\begin{proof}
Note that if $\Delta'= \Delta(\cc^+)$, then $\Delta'_W$ and $\Delta_W$ differ only in $d$-dimensional faces
and hence $\tl H_t(\Delta'_W; K)= \tl H_t(\Delta_W; K)$ for each $t> d$ and $W\se V$. Thus the result follows
directly form Fr\"oberg's theorem \ref{Fro}. Note that $\Delta^{[d]}= \lg \cc \rg$, thus $\Delta$ is
$d$-chorded \ifof $\lg \cc \rg$ is so.
\end{proof}

The previous simple result shows why the concept of ascent of a clutter can be useful. For example, we show
that a main theorem of \cite{CF2} can simply follow \ref{lin res +} and the results of \cite{CF1}. First we
state the needed results of \cite{CF1} as a lemma.

\begin{lem}\label{CF1-main}
Let $\cc$ be a $d$-clutter. Suppose that $K$ is a field of characteristic 2 and $\Delta=\Delta(\cc)$. Then $\tl
H_d(\Delta_W;K)=0$ for all $W\se V$ \ifof $\lg \cc \rg$ is $d$-chorded.
\end{lem}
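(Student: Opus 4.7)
The plan is to exploit the fact that over a field $K$ of characteristic $2$ the signs in the simplicial boundary disappear, so a $d$-chain $z\in \tl C_d(\Delta_W;K)$ may be identified with its support, a collection $\Omega$ of $d$-faces, and $z$ is a cycle \ifof every $(d-1)$-face is contained in an even number of members of $\Omega$, that is, \ifof $\Omega$ is a disjoint union of combinatorial $d$-cycles in the sense of \cite{CF1}. Moreover, since the $d$-faces of $\Delta=\Delta(\cc)$ are exactly the circuits of $\cc$, which are the facets of $\lg \cc \rg$, any such $\Omega$ automatically lies in $\lg \cc \rg$, and $(d+1)$-faces of $\Delta_W$ are precisely the $(d+1)$-cliques of $\cc$ contained in $W$.

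For the direction $\tl H_d(\Delta_W;K)=0$ for all $W$ \give\ $\lg \cc \rg$ is $d$-chorded, I would take a face-minimal $d$-cycle $\Omega\se \lg \cc \rg$ which is not $d$-complete and set $W=\V(\Omega)$. Then $\Omega$ represents a $d$-cycle in $\Delta_W$, so by hypothesis $\Omega=\rnd c$ for some $(d+1)$-chain $c=\sum_{j=1}^k F_j$, each $F_j$ being a $(d+1)$-clique of $\cc$ inside $W$. Setting $\Omega_j=\rnd F_j$ (the $d$-skeleton of the simplex on $F_j$) gives a face-minimal $d$-cycle on $d+2$ vertices lying in $\lg \cc \rg$. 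The identity $\Omega=\sum_j \Omega_j$ in characteristic $2$ encodes conditions \ref{a}, \ref{b} and \ref{c}, while \ref{d} follows from the fact that any face-minimal $d$-cycle on exactly $d+2$ vertices must be the full $d$-skeleton of the $(d+1)$-simplex and hence $d$-complete; thus $|\V(\Omega)|\geq d+3 > d+2=|\V(\Omega_j)|$. Finally, $k\geq 2$ since $k=1$ would force $\Omega=\Omega_1$, which is $d$-complete.

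For the converse, I would use induction on $|W|$ with an inner induction on $|\Omega|$, the number of $d$-faces in a given cycle $\Omega$ in $\Delta_W$. Pick a face-minimal subcycle $\Omega'\se \Omega$. If $\V(\Omega')\su W$, the outer induction hypothesis makes $\Omega'$ a boundary in $\Delta_{\V(\Omega')}\se \Delta_W$, and the inner induction applies to $\Omega+\Omega'$, which has fewer faces. Otherwise $\V(\Omega')=W$: if $\Omega'$ is $d$-complete, then $W$ itself is a $(d+1)$-clique of $\cc$, so $W\in \Delta_W$ and $\Omega'=\rnd W$ is a boundary; if $\Omega'$ is not $d$-complete, apply the $d$-chording hypothesis to obtain cycles $\Omega_1,\ldots,\Omega_k$ satisfying \ref{a}--\ref{d}, and observe that conditions \ref{b} and \ref{c} say precisely $\Omega'=\sum_i \Omega_i$ in characteristic $2$. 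Since $\V(\Omega_i)\su W$ by \ref{d}, each $\Omega_i$ is a boundary in $\Delta_{\V(\Omega_i)}$ by outer induction, hence so is $\Omega'$.

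The main obstacle, as I see it, is the combinatorial bookkeeping in the forward direction: one must check the lower bound $|\V(\Omega)|\geq d+3$ for any face-minimal $d$-cycle that is not $d$-complete, and translate the equality $\Omega=\sum_j \rnd F_j$ carefully into the four chording conditions. The rest is routine once one consistently reads cycles, boundaries and supports through the characteristic-$2$ identification, and takes care to restrict to the correct vertex set $W$ so that induction can proceed.
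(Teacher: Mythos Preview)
Your proposal is correct. The paper itself does not give a proof of this lemma at all: it simply cites \cite[Proposition 5.8]{CF1} for the direction (\rgive) and the proof of part 2 of \cite[Theorem 6.1]{CF1} for (\give). What you have written is essentially a reconstruction of those arguments from \cite{CF1}---the characteristic-$2$ identification of chains with their supports, the observation that simplicial $d$-cycles correspond to combinatorial $d$-cycles in $\lg\cc\rg$, and the double induction (on $|W|$ and on the number of $d$-faces) that unwinds the chording conditions into a boundary. So the approaches coincide in substance; you have simply supplied the details that the paper delegates to a citation. One small point worth making explicit in your write-up: in the case where the face-minimal subcycle $\Omega'$ is $d$-complete with $\V(\Omega')=W$, you should remark that face-minimality forces $|W|=d+2$ (any $(d+2)$-subset of a larger complete $d$-cycle already bounds a smaller cycle), which is what justifies calling $W$ a $(d+1)$-clique.
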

\begin{proof}
(\rgive) is \cite[Proposition 5.8]{CF1} and the proof for (\give) is the proof of part 2 of \cite[Theorem
6.1]{CF1}.
\end{proof}

We inductively define a \emph{CF-chordal} clutter. We consider $\tohi$, CF-chordal and we say that a non empty
$d$-clutter $\cc$ is CF-chordal, when $\lg \cc \rg$ is $d$-chorded and $\cc^+$ is CF-chordal. It is easy to check that
$\cc$ is CF-chordal \ifof $\Delta(\cc)^{[m]}$ is $m$-chorded for all $m\geq d$. In \cite{CF2} simplicial complexes with
this property are called \emph{chorded}. Thus the next result is indeed Theorem 18 of \cite{CF2}, which is one of the
two main theorems of that paper. Before stating this result, it should be mentioned that an example of a clutter $\cc$
with $\lg \cc \rg$ $d$-chorded but $\lg \cc^+ \rg$ not $(d+1)$-chorded, is presented in \cite[Example 16]{CF2}. This
example shows that $\lg \cc \rg$ can be $d$-chorded while $\cc$ is not CF-chordal.

\begin{cor}[{\cite[Theorem 18]{CF2}}]\label{CF-2 main}
Let $\cc$ be a $d$-clutter. Suppose that $K$ is a field of characteristic 2. Then $I(\bar \cc)$ has a linear resolution
over $K$, \ifof $\cc$ is CF-chordal.
\end{cor}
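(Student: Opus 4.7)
My plan is to deduce the corollary directly from Proposition \ref{lin res +} and Lemma \ref{CF1-main} by a short induction on the quantity $n-d$, which decreases by $1$ when we pass from $\cc$ to $\cc^+$ (keeping the ambient vertex set $V$ fixed, with possibly some isolated vertices).

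The base case is $n-d=1$, i.e.\ $d=n-1$, in which the only possible $d$-face of $V$ is $V$ itself. Hence $\cc$ is either empty or equals $\{V\}$. In both cases $\lg\cc\rg$ is trivially $d$-chorded (no face-minimal $d$-cycle other than possibly the complete one, which satisfies the definition vacuously), $\cc^+=\tohi$, so $\cc$ is CF-chordal; and correspondingly $I(\b\cc)$ is either zero or principal, hence has a linear resolution over any field. This handles the starting case cleanly without any characteristic assumption.

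For the inductive step, assume the statement holds for every clutter $\cc'$ of dimension $d'$ on a vertex set of size $n'$ with $n'-d'<n-d$. Since $\cc^+$ is a $(d+1)$-clutter on (a subset of) $V$, the inductive hypothesis gives
\begin{equation*}
I(\b{\cc^+})\text{ has a linear resolution over }K \iff \cc^+\text{ is CF-chordal.}
\end{equation*}
Combining Proposition \ref{lin res +} with Lemma \ref{CF1-main} (this is where characteristic $2$ enters), we have
\begin{align*}
I(\b\cc)\text{ has a linear resolution over }K \iff{}& I(\b{\cc^+})\text{ has a linear resolution over }K \\
 &\text{ and }\tl H_d(\Delta(\cc)_W;K)=0\text{ for all }W\se V\\
\iff{}& \cc^+\text{ is CF-chordal and }\lg \cc \rg\text{ is }d\text{-chorded}\\
\iff{}& \cc\text{ is CF-chordal,}
\end{align*}
where the final equivalence is exactly the inductive definition of CF-chordal given just above the statement.

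I do not anticipate a serious obstacle: all the real work has already been encapsulated in Proposition \ref{lin res +} (which is itself almost immediate from Fr\"oberg's theorem) and in Lemma \ref{CF1-main} (which imports Connon--Faridi's characteristic $2$ results from \cite{CF1}). The only thing one must be slightly careful about is the bookkeeping when $\cc^+$ has fewer effective vertices than $\cc$: one should view $\cc^+$ as living on the same vertex set $V$ (with possibly isolated vertices), so that the induction parameter $n-d$ strictly decreases. The rest is just assembling these pieces.
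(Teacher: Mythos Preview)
Your proposal is correct and is exactly the approach the paper takes: the paper's entire proof reads ``This follows from \ref{CF1-main} and \ref{lin res +} and a simple induction on $n-d$,'' and you have simply spelled out that induction in detail. The bookkeeping you flag (viewing $\cc^+$ on the same vertex set so that $n-d$ drops by one) is the only subtlety, and you handle it correctly.
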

\begin{proof}
This follows from \ref{CF1-main} and \ref{lin res +} and a simple induction on $n-d$.
\end{proof}

As the following result shows, similar to having a linear resolution, chordality is also preserved under ascension.
\begin{thm}\label{chordal +}
Let $\cc$ be a $d$-clutter. If $\cc$ is chordal, then so is $\cc^+$.
\end{thm}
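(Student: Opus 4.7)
The plan is to induct on the length $t$ of a chordal elimination sequence $e_1, \ldots, e_t$ of $\cc$. For $t = 0$, $\cc$ has no circuits and hence no $(d+1)$-cliques, so $\cc^+ = \tohi$ is trivially chordal. For $t \geq 1$, set $e = e_1 \in \sms(\cc)$; then $\cc - e$ is chordal via $e_2, \ldots, e_t$, so by induction $(\cc - e)^+$ is chordal. It will suffice to exhibit a chordal elimination sequence from $\cc^+$ down to $(\cc - e)^+$ and concatenate with the inductive one.

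A direct check gives $(\cc - e)^+ = \{A \in \cc^+ : e \not\se A\}$: if $A \in \cc^+$ has $|A| = d+2$ and $|e| = d$, then the $(d+1)$-subsets of $A$ containing $e$ are exactly $e \cup \{u\}$ for $u \in A \sm e$, so $A \in (\cc-e)^+$ iff $e \not\se A$. Thus the circuits of $\cc^+$ we must destroy are precisely the $(d+1)$-cliques $A = e \cup \{u, v\}$ of $\cc$ containing $e$. For each such $A$ both $eu$ and $ev$ are maximal subcircuits of $\cc^+$, so I enumerate the vertices $u_1, \ldots, u_k \in \V(\cc) \sm e$ for which $f_j := e u_j \in \MS(\cc^+)$. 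Every circuit in $\cc^+ \sm (\cc-e)^+$ then contains at least one $f_j$. The plan is to delete $f_1, \ldots, f_k$ in order from $\cc^+$, skipping any $f_j$ that has already ceased to be a maximal subcircuit, and land exactly at $(\cc - e)^+$.

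The main technical point is to verify that each $f_j$ is simplicial at the time of its deletion. Let $\cd_{j-1}$ denote the clutter obtained after deleting $f_1, \ldots, f_{j-1}$. For any $v \in \N_{\cd_{j-1}}[f_j] \sm f_j$, the circuit $f_j v = e u_j v$ of $\cd_{j-1}$ is a $(d+1)$-clique of $\cc$ containing no $f_i = e u_i$ with $i < j$, which forces $v \in \N_\cc[e]$ and $v \notin \{u_1, \ldots, u_{j-1}\}$. Thus $\N_{\cd_{j-1}}[f_j] \se B := \N_\cc[e] \sm \{u_1, \ldots, u_{j-1}\}$. Because $e \in \sms(\cc)$, the set $\N_\cc[e]$ is a clique of $\cc$; every $(d+2)$-subset of $B$ is therefore a $(d+1)$-clique of $\cc$ missing each $u_i$, hence missing each $f_i$, so it lies in $\cd_{j-1}$. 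Consequently $B$ is a clique of $\cd_{j-1}$, and so is its subset $\N_{\cd_{j-1}}[f_j]$, giving $f_j \in \sms(\cd_{j-1})$. I expect this shrinking-clique bookkeeping to be the main obstacle; once it is in hand, concatenation with the inductive chordal sequence for $(\cc - e)^+$ produces a chordal elimination for $\cc^+$.
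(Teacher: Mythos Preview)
Your proof is correct and follows essentially the same approach as the paper's. Both arguments proceed by induction, pick $e\in\sms(\cc)$, observe that $(\cc-e)^+=\{A\in\cc^+: e\not\se A\}$, and then delete the maximal subcircuits $eu$ of $\cc^+$ one at a time, using the simpliciality of $e$ in $\cc$ to verify that each $eu_j$ is simplicial in the intermediate clutter $\cd_{j-1}$ via the inclusion $\N_{\cd_{j-1}}[f_j]\se \N_\cc[e]\sm\{u_1,\ldots,u_{j-1}\}$; the only cosmetic differences are that the paper inducts on $|\cc|$ rather than the length of the elimination sequence and enumerates all $v\in\N_\cc[e]\sm e$ rather than only those with $ev\in\MS(\cc^+)$.
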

\begin{proof}
We use induction on $|\cc|$. The result is clear for $|\cc|=0,1$, where $\cc^+=\tohi$. Choose $e\in \sms(\cc)$ such
that $\cc-e$ is chordal. If $e$ is contained in only one circuit, then it is not contained in any clique on more than
$d+1$ vertices. Thus $\cc^+=(\cc-e)^+$ and the claim follows the induction hypothesis. So we assume that $\N_\cc[e]$ is
a clique on at least $d+2$ vertices. Let $F$ be any $d+1$ subset of $\N_\cc[e]$ with $e\se F$. If $v\in
\N_{\cc^+}[F]\sm F$, then $Fv$ is a clique in $\cc$ and hence $ev\in \cc$. This shows that $\N_{\cc^+}[F]\se
\N_\cc[e]$. Let $\{v_1, \ldots, v_t\}= \N_\cc[e]\sm e$, $F_i=ev_i$ and $\cc_i= \cc- F_1- \cdots- F_{i}$. Note that
$\cc_i^+= \cc^+ -F_1- \cdots- F_i$. We show that $F_i\in \sms(\cc^+_{i-1})$, if $F_i\in \MS(\cc_{i-1}^+)$. Then as
$\cc_t=\cc-e$ is chordal it follows by the induction hypothesis that $\cc_t^+$ and hence $\cc^+$ are chordal.

Suppose that $G\se \N_{\cc_{i-1}^+}[F_i]$ and $|G|=d+2$. Then by the above argument $G\se \N_\cc[e]$ and hence $G\in
\cc^+$. If $v_j\in G$ for some $j<i$, then $F_iv_j\in \cc_{i-1}^+$. But $F_j= ev_j\se F_iv_j$ and $F_j\notin
\cc_{i-1}$, a contradiction. Therefore, no $v_j\in G$ for $j<i$ and hence no $F_j\se G$ for such a $j$. Thus $G$ is a
clique in $\cc_{i-1}$, that is, $G\in \cc_{i-1}^+$ and hence $F_i\in \sms(\cc_{i-1}^+)$.
\end{proof}

\begin{ex}\label{chorded octa}
Suppose that $\cc$ is the 2-clutter shown in \oldref{Fig octa}. The circuits of $\cc$ are the faces of the
hollow octahedron and the four triangles shown in a darker color. It is not hard to check that $\cc$ is
chordal, for example, the sequence 12, 14, 64, 62, 13, 23, 63, 43, is a sequence of consecutive simplicial
maximal subcircuits, by deletion of which, we reach to the empty clutter. Hence $\cc^+=\{1235,1345, 2356,
3456\}$ is also chordal by \ref{chordal +}. Indeed, all maximal subcircuits of $\cc^+$ are simplicial except
for the four darker ones.

\begin{figure}[!ht]
  \centering
  \includegraphics[scale=1]{{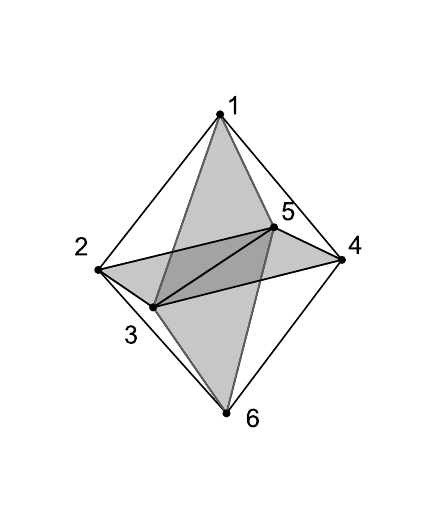}}
  \caption{A chordal 2-clutter}\label{Fig octa}
\end{figure}
\end{ex}

Next we state a theorem that shows some connections between deleting elements of $\sms(\cc^+)$ and having a linear
resolution.
\begin{thm}\label{lin res sms}
Let $\cc$ be a $d$-clutter. Suppose that $\cc^+\neq \tohi$. Then, considering the following statements, we have
\ref{lin res sms 1} \give\ \ref{lin res sms 2} \give\ \ref{lin res sms 3}. Moreover, if $\sms(\cc^+)\neq \tohi$, then
(\ref{lin res sms 1}, \ref{lin res sms 2} and \ref{lin res sms 3} are equivalent.
\begin{enumerate}
  \item \label{lin res sms 1} There is a $F\in \MS(\cc^+)$ such that $I(\b{\cc-F})$ has a linear resolution
      over $K$ and also $I(\b{\cc^+})$ and $I(\b{\cc-v})$ have linear resolutions over $K$ for each $v\in
      V$.
  \item \label{lin res sms 2} $I(\b \cc)$ has a linear resolution over $K$.
  \item \label{lin res sms 3} For all $F\in \sms(\cc^+)$, $I(\b{\cc-F})$ has a linear resolution over $K$ and also
      $I(\b{\cc^+})$ and $I(\b{\cc-v})$ have linear resolutions over $K$ for each $v\in V$.
\end{enumerate}
\end{thm}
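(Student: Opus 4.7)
The plan is to apply Proposition \ref{lin res +} and Fr\"oberg's theorem \ref{Fro} to reduce each of (i), (ii), (iii) to statements about vanishing of $\tl H_d$ of induced subcomplexes of $\Delta=\Delta(\cc)$. Since linearity of $I(\b{\cc^+})$ is assumed explicitly in both (i) and (iii) and follows from (ii) by Proposition \ref{lin res +}, in each case the remaining task is to control $\tl H_d$ of the relevant induced subcomplexes.

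The central computation is the following. For any $F\in\MS(\cc^+)$, which is automatically a circuit of $\cc$, the subcomplex $\Delta'=\Delta(\cc-F)$ of $\Delta$ equals $\{A\in\Delta\mid F\not\se A\}$, and for $W\se V$ containing $F$ the map $A\mapsto A\sm F$ induces a (sign-corrected) isomorphism of augmented chain complexes $\tl C_i(\Delta_W,\Delta'_W)\cong\tl C_{i-d-1}(\link_{\Delta_W}(F))$; hence $\tl H_i(\Delta_W,\Delta'_W;K)\cong\tl H_{i-d-1}(\link_{\Delta_W}(F);K)$. When $F\not\se W$ we have $\Delta'_W=\Delta_W$ and there is nothing to check.

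For (i) $\Rightarrow$ (ii), it suffices by Proposition \ref{lin res +} to verify $\tl H_d(\Delta_W;K)=0$ for every $W\se V$. For $W\su V$, choose $v\in V\sm W$; then $\Delta_W=\Delta(\cc-v)_W$ and the vanishing follows from linearity of $I(\b{\cc-v})$. For $W=V$, the long exact sequence of the pair $(\Delta,\Delta')$ contains the piece $\tl H_d(\Delta';K)\to \tl H_d(\Delta;K)\to \tl H_{-1}(\link_\Delta(F);K)$; the left term vanishes by linearity of $I(\b{\cc-F})$, and the right term vanishes because $F\in\MS(\cc^+)$ guarantees that $\link_\Delta(F)$ has at least one vertex. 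For (ii) $\Rightarrow$ (iii), linearity of $I(\b{\cc^+})$ follows from Proposition \ref{lin res +}, and linearity of $I(\b{\cc-v})$ holds because $\Delta(\cc-v)_W=\Delta_W$ for $W\se V\sm\{v\}$. For $F\in\sms(\cc^+)$, the key geometric fact is that $\N_{\cc^+}[F]$ is actually a clique of $\cc$: any $(d+1)$-subset $B$ of $\N_{\cc^+}[F]$ satisfies $B\cup\{v\}\in\cc^+$ for some $v\in\N_{\cc^+}[F]\sm B$ (which exists since $|\N_{\cc^+}[F]|\geq d+2$), forcing $B\in\cc$. Hence $\link_\Delta(F)$, and each $\link_{\Delta_W}(F)$, is a full simplex (possibly equal to $\{\tohi\}$), whose reduced homology vanishes in every nonnegative degree; the long exact sequence now forces $\tl H_i(\Delta'_W;K)=0$ for all $i\geq d$, and Fr\"oberg's theorem yields linearity of $I(\b{\cc-F})$. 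The implication (iii) $\Rightarrow$ (i) is immediate when $\sms(\cc^+)\neq\tohi$, since any $F\in\sms(\cc^+)\se\MS(\cc^+)$ witnesses (i).

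The main obstacle is the bookkeeping around the relative chain-complex isomorphism (with correct sign conventions) and the case distinction $F\se W$ versus $F\not\se W$. The essential geometric content, however, is the sharp contrast between the two strengths of the hypothesis on $F$: $F\in\MS(\cc^+)$ only forces $\link_\Delta(F)$ to be nonempty (so $\tl H_{-1}$ vanishes), which suffices for the single degree $d$ needed in (i) $\Rightarrow$ (ii), whereas $F\in\sms(\cc^+)$ upgrades the link to a full simplex (so every reduced homology vanishes), which is what (ii) $\Rightarrow$ (iii) requires in order to control $\tl H_i(\Delta'_W)$ for all $i\geq d$ simultaneously. This contrast is exactly why the hypothesis $\sms(\cc^+)\neq\tohi$ is needed for the converse.
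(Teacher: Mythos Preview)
Your proof is correct, and for \ref{lin res sms 1} $\Rightarrow$ \ref{lin res sms 2} it is essentially the paper's argument recast in the language of the long exact sequence of the pair $(\Delta,\Delta')$: the paper carries out the same step by hand at the chain level, subtracting $\partial_{d+1}(a_iG)$ for some $G\in\cc^+$ containing $F$ to push a cycle into $\tl C_d(\Delta';K)$, which is precisely the surjectivity of $\tl H_d(\Delta')\to\tl H_d(\Delta)$ that your exact sequence encodes.

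For \ref{lin res sms 2} $\Rightarrow$ \ref{lin res sms 3}, however, you take a genuinely different route from the paper. The paper works algebraically: it sets $L=\lg x_F\rg$, computes the minimal generators of $I(\b\cc)\cap L$ explicitly as $x_F\lg x_a\mid a\notin\N_{\cc^+}[F]\rg$ (this is where simpliciality of $F$ enters), observes that this intersection therefore has a $(d+2)$-linear resolution, and then reads off linearity of $I+L=I(\b{\cc-F})$ from the $\Tor$ long exact sequence of $0\to I\cap L\to I\oplus L\to I+L\to 0$. You instead stay entirely on the simplicial side, using the same relative-homology identification $\tl H_i(\Delta_W,\Delta'_W)\cong\tl H_{i-d-1}(\link_{\Delta_W}F)$ as in the first implication, together with the observation that simpliciality of $F$ in $\cc^+$ forces $\N_{\cc^+}[F]$ to be a clique of $\cc$ and hence each $\link_{\Delta_W}(F)$ to be a full simplex. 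Both arguments pivot on the same combinatorial fact about $\N_{\cc^+}[F]$, but your version is more uniform (one piece of machinery handles both implications) and avoids the ideal-theoretic detour. The paper's approach, on the other hand, has the advantage that its explicit description of $I\cap L$ immediately yields the Betti-number formula $\beta_i(I(\b{\cc-F}))=\beta_i(I(\b\cc))+\binom{t}{i}$ stated in the subsequent corollary, which your homological argument does not produce without further work.
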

\begin{proof}
\ref{lin res sms 1} \give\ \ref{lin res sms 2}: Set $\Delta=\Delta(\cc)$. According to \ref{lin res +}, we have to show
that $\tl H_d(\Delta_W; K)=0$ for all $W\se V$. If $W\su V$, say $v\in V\sm W$, then $\Delta_W= \Delta(\cc_W)=
\Delta((\cc-v)_W)$. Thus by Fr\"oberg's theorem, as $I(\b{\cc-v})$ has a linear resolution over $K$, it follows that
$\tl H_d(\Delta_W; K)=0$. Consequently it remains to show that $\tl H_d(\Delta; K)=0$.

Assume that $x= \sum_{i=1}^l a_iF_i\in \tl C_d(\Delta; K)$ with $\rnd_d(x)=0$ where $0\neq a_i\in K$ and $F_i\in \cc$.
We have to show that $x\in \Im \rnd_{d+1}$. Let $F\in \MS(\cc^+)$ be such that $I(\b{\cc-F})$ has a linear resolution
over $K$. Assume that for some $i$, we have $F_i=F$. As $F\in \MS(\cc^+)$, there is a $G\in \cc^+$ such that $F\se G$.
Now $y= \rnd_{d+1}(a_iG)$ has a term $\pm a_iF_i$. Hence in one of the two elements $x\mp y$, the coefficient of $F$ is
zero. Call this element $z$. Then $\rnd_d(z)=\rnd_d(x)\mp \rnd_d(\rnd_{d+1}(a_iG))= 0$. Thus we can assume that for no
$i$, $F_i=F$. Then $x\in \tl C_d(\Delta(\cc-F); K)$ and as $I(\cc-F)$ has a linear resolution, $\tl H_d(\Delta(\cc-F);
K)=0$. So there are $G_i$'s in $(\cc-F)^+\se \cc^+$ and $b_j$'s in $K$ such that $x=\rnd_{d+1}(\sum b_jG_j)\in \Im
\rnd_{d+1}$.

\ref{lin res sms 2} \give\ \ref{lin res sms 3}: As $\Delta(\cc-v)_W = \Delta(\cc)_{W\sm\{v\}}$ and by Fr\"oberg's
theorem, $I(\b{\cc-v})$ has a linear resolution over $K$ for each $v\in V$. According to \ref{lin res +},
$I(\b{\cc^+})$ also has a linear resolution over $K$. Now let $F\in \sms(\cc^+)$ and also assume that $V=[n]$. Let
$L=\lg x_F \rg$ and $I=I(\b\cc)$. We have to show $I(\b{\cc-F})=I+L$ has a linear resolution over $K$.

First we compute the minimal generating set of $I\cap L$. Suppose that $u$ is one of the minimal generators of the
squarefree monomial ideal $I\cap L$. Then as $x_F|u$, we should have $u=x_{F\cup A}$ for some $A\se [n]\sm F$. Since
$u\in I$, there is a $G_0\in \b\cc$ such that $G_0\se F\cup A$. Assume $|A|>1$ and $v\in A$. Then for no $G\in \b\cc$
we have $G\se Fv$, else $x_{Fv}\in I\cap L$ which contradicts $u$ being a minimal generator. This means that $Fv$ is a
clique of $\cc$ and hence $v\in \N_{\cc^+}[F]$. Since $v\in A$ was arbitrary, we have $F\cup A\se N_{\cc^+}[F]$ which
is a clique, because $F$ is simplicial. But this contradicts $G_0\se F\cup A$ and it follows that $|A|=1$, say
$A=\{a\}$. As $G_0\se Fa$, we get $a\notin \N_{\cc^+}[F]$. On the other hand, for an arbitrary $a\in B=[n]\sm
\N_{\cc^+}[F]$, it is easy to see that $x_{Fa}\in I\cap L$. Consequently, $I\cap L=\lg x_{Fa}| a\in B\rg =x_F  \lg x_a|
a\in B \rg$.

Since multiplying in $x_F$ is an $S$-isomorphism of degree $d+1$ from $\lg x_a| a\in B \rg$ to $I\cap L$ and by
\cite[Corollary 7.4.2]{hibi}, $I\cap L$ has a $(d+2)$-linear resolution over $K$. Also $I$ and $L$ have $(d+1)$-linear
resolutions over $K$. Now consider the exact sequence
$$0\to I\cap L\to I\dis L \to I+L \to 0.$$
Writing the $(i+j)$-th degree part of the long exact sequence of $\Tor^S(K,-)$ applied on the above sequence, we get
the exact sequence
$$ \cdots \to \Tor_i^S(K, I\dis L)_{i+j} \to \Tor_i^S(K, I+L)_{i+j} \to \Tor_{i-1}^S(K, I\cap L)_{i-1+(j+1)} \to \cdots.$$
If $j\neq d+1$ then both flanking terms are zero and hence the middle term is also zero. This means that $I+L$ has a
$(d+1)$-linear resolution, as required.
\end{proof}

In the following example, we show how the above theorem can be used.
\begin{ex}
Suppose that $\cc$ is as in Example \oldref{counter Ex}. Let $\cd=\cc\cup \{278\}$. Then $\cd^+=\{1278\}$,
$278\in \sms(\cd^+)$ and $\cd^+$ is chordal. Hence $I(\b{\cd^+})$ has a linear resolution over every field.
Also it is easy to verify that for any $W\su [8]$, $\cd_W$ is chordal (indeed, if $W\neq \{1,2,7,8\}$, then
either $\cd_W$ is empty or it has an edge which is contained in exactly one triangle and hence is simplicial
and if $W= \{1,2,7,8\}$, $\cd_W$ is complete) and thus has a linear resolution over every field. Thus, as
$I(\b{\cd-278})= I(\b \cc)$ has a linear resolution over any field (see Example \oldref{counter Ex}) and
according to \ref{lin res sms}, we get that $I(\b \cd)$ has a linear resolution over every field.
\end{ex}

Note that  in \ref{lin res sms}\ref{lin res sms 3}, against part \ref{lin res sms 1}, we cannot replace $F\in
\sms(\cc^+)$ with $F\in \MS(\cc^+)$. For example, if $\cc$ is as in Example \oldref{chorded octa}, then $\lg
\cc-135 \rg$ is not 2-chorded, and hence $I(\b{\cc-135})$ has not a linear resolution over $\z_2$, although
$\cc$ is chordal and $I(\b\cc)$ has a linear resolution over every field.

Also note that \ref{lin res sms}\ref{lin res sms 3} does not necessarily imply \ref{lin res sms}\ref{lin res
sms 1}, since we may have a clutter $\cc$ satisfying \ref{lin res sms}\ref{lin res sms 3}, but with
$\sms(\cc^+)=\tohi$. Although, it should be mentioned that the author could not find such an example. Thus we
raise the following question.
\begin{ques}
Are the three statements of \ref{lin res sms} equivalent for a $d$-clutter $\cc$ with a non-empty ascent?
\end{ques}

\begin{rem}
In the proof of \ref{lin res sms 1} \give\ \ref{lin res sms 2} of \ref{lin res sms}, we proved that if there
exists $F\in \MS(\cc^+)$ such that $I(\b{\cc-F})$ has a linear resolution over $K$, then $\tl
H_d(\Delta(\cc); K)=0$.
\end{rem}
The following is also a corollary to the proof of \ref{lin res sms}. Recall that $\beta_i(I)=\sum_{j} \beta_{ij}(I)$.
\begin{cor}
Let $\cc$ be a $d$-clutter. Suppose that $I= I(\b \cc)$ has a linear resolution over $K$, $F\in \sms(\cc^+)$ and
$J=I(\b{\cc-F})$. Then $J$ has a linear resolution over $K$ with Betti numbers $\beta_i(J)= \beta_i(I)+{t \choose i}$,
where $t=n-|\N_{\cc^+}[F]|$.
\end{cor}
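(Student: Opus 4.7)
The plan is to exploit the short exact sequence $0 \to I \cap L \to I \dis L \to J = I + L \to 0$ that was already set up in the proof of \ref{lin res sms}, where $L = \lg x_F \rg$ and $I \cap L = x_F \lg x_a|\, a \in B \rg$ with $B = [n] \sm \N_{\cc^+}[F]$, so that $|B| = t$. Since $F \in \sms(\cc^+)$, Theorem \ref{lin res sms} already guarantees that $J$ has a $(d+1)$-linear resolution, so only the Betti numbers remain to be computed.

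First I would determine the Betti numbers of the other three modules in the sequence. The ideal $L$ is principal with a generator in degree $d+1$, so $\beta_0(L) = 1$ and $\beta_i(L) = 0$ for $i > 0$. For $I \cap L$, multiplication by $x_F$ gives an $S$-module isomorphism of degree $d+1$ from $\lg x_a|\, a \in B \rg$ onto $I \cap L$; the Koszul complex on the $t$ variables $\{x_a\}_{a \in B}$ resolves $\lg x_a|\, a \in B \rg$ and yields $\beta_i(I \cap L) = \binom{t}{i+1}$, all concentrated in internal degree $i + (d+2)$. In particular $I \cap L$ has a $(d+2)$-linear resolution, while $I$ has a $(d+1)$-linear resolution by assumption.

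Next I would apply $\Tor_\blt^S(K, -)$ to the short exact sequence and extract the part in internal degree $i + d + 1$. Because $I$, $L$ and $J$ are all $(d+1)$-linear while $I \cap L$ is $(d+2)$-linear, both $\Tor_i^S(K, I \cap L)_{i+d+1}$ and $\Tor_{i-1}^S(K, I \dis L)_{i+d+1}$ vanish, so the long exact sequence collapses to
$$ 0 \to \Tor_i^S(K, I)_{i+d+1} \dis \Tor_i^S(K, L)_{i+d+1} \to \Tor_i^S(K, J)_{i+d+1} \to \Tor_{i-1}^S(K, I \cap L)_{i+d+1} \to 0, $$
giving $\beta_i(J) = \beta_i(I) + \beta_i(L) + \beta_{i-1}(I \cap L)$.

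Substituting the values from the first step (with the convention $\beta_{-1} = 0$), the correction term $\beta_i(L) + \beta_{i-1}(I \cap L)$ equals $1 + 0 = \binom{t}{0}$ when $i = 0$ and $0 + \binom{t}{i} = \binom{t}{i}$ when $i \geq 1$, producing $\beta_i(J) = \beta_i(I) + \binom{t}{i}$ in every case. There is no serious obstacle here; the only care required is the bookkeeping of internal degrees needed to verify that the long exact sequence genuinely degenerates into the asserted short exact sequence.
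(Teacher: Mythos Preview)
Your proposal is correct and follows essentially the same route as the paper: you reuse the short exact sequence $0\to I\cap L\to I\oplus L\to J\to 0$ from the proof of \ref{lin res sms}, compute the Betti numbers of $L$ and $I\cap L$, and read off $\beta_i(J)=\beta_i(I)+\beta_i(L)+\beta_{i-1}(I\cap L)$ from the degenerated long exact $\Tor$-sequence. The only cosmetic difference is that you obtain $\beta_i(I\cap L)=\binom{t}{i+1}$ directly from the Koszul complex on the $t$ variables in $B$, whereas the paper invokes the linear-quotients Betti number formula \cite[Corollary 7.4.2]{hibi} for $\lg x_a\mid a\in B\rg$; both yield the same value and the remainder of the argument is identical.
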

\begin{proof}
Suppose that $L=\lg x_F\rg$ and $B=[n]\sm \N_{\cc^+}[F]$ as in the proof of \ref{lin res sms 2} \give\ \ref{lin res sms
3}. So $t=|B|$ and if $L'= \lg x_a|a\in B \rg$, then $I\cap L=x_F L'$ and $\beta_i(I\cap L)= \beta_i(L')$ which by
\cite[Corollary 7.4.2]{hibi} is equal to $\sum_{k=1}^t {k-1 \choose i}= \sum_{k=i}^{t-1} {k \choose i}= {t \choose i}$
. Also $\beta_0(L)=1$ and $\beta_i(L)=0$ for all $i>0$. Thus by taking $\dim_K$ of the long exact sequence of
$\Tor_i^S(K,-)$ in the proof of \ref{lin res sms}, we get $\beta_i(J)= \beta_i(I+L)= \beta_i(I)+\beta_i(L)+
\beta_{i-1}(I\cap L)$, where the last term is assumed to be zero if $i=0$.
\end{proof}

If in \ref{lin res sms} we restrict to the case that $\textrm{char} K=2$ and by using \ref{CF-2 main}, we get
the following.
\begin{cor}\label{lin res sms char 2}
Let $\cc$ be a $d$-clutter. Suppose that $\cc^+\neq \tohi$.
\begin{enumerate}
\item \label{R2 1} If $\cc^+$ is CF-chordal and there is a $F\in \MS(\cc^+)$ such that $\cc-F$ is CF-chordal and
    $\cc-v$ is CF-chordal for all $v\in \V(\cc)$, then $\cc$ is CF-chordal.
\item \label{R2 2} If $\cc$ is CF-chordal, then for all $F\in \sms(\cc^+)$ and all $v\in \V(\cc)$, $\cc-F$ and
    $\cc-v$ are CF-chordal.
\end{enumerate}
Moreover, if $\sms(\cc^+)\neq \tohi$, then the converses of both \ref{R2 1} and \ref{R2 2} hold.
\end{cor}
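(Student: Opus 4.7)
The plan is to transport everything through the dictionary supplied by Corollary \ref{CF-2 main}: over a field $K$ of characteristic $2$, a $d'$-clutter $\cc'$ is CF-chordal \ifof $I(\b{\cc'})$ has a linear resolution over $K$. After applying this equivalence termwise, the three conditions of parts \ref{R2 1} and \ref{R2 2} correspond exactly to conditions \ref{lin res sms 1}, \ref{lin res sms 2}, \ref{lin res sms 3} of Theorem \ref{lin res sms}, and the corollary reduces to invoking that theorem.

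For part \ref{R2 1}, I would fix $K$ of characteristic $2$, apply Corollary \ref{CF-2 main} to the hypotheses to obtain linear resolutions over $K$ for $I(\b{\cc^+})$, for $I(\b{\cc-F})$ (for the prescribed $F\in \MS(\cc^+)$), and for $I(\b{\cc-v})$ (for every $v\in \V(\cc)$). This is precisely condition \ref{lin res sms 1} of \ref{lin res sms}; the implication \ref{lin res sms 1} \give\ \ref{lin res sms 2} of that theorem then yields a linear resolution for $I(\b\cc)$ over $K$, and a second application of \ref{CF-2 main} gives the CF-chordality of $\cc$. Part \ref{R2 2} is the mirror argument using \ref{lin res sms 2} \give\ \ref{lin res sms 3}: starting from $\cc$ CF-chordal, we obtain condition \ref{lin res sms 2}, so \ref{lin res sms 3} holds, and translating back yields the CF-chordality of $\cc-F$ for every $F\in \sms(\cc^+)$ and of $\cc-v$ for every $v$.

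For the ``moreover'' clause, the extra hypothesis $\sms(\cc^+)\neq \tohi$ activates the full equivalence \ref{lin res sms 1} $\iff$ \ref{lin res sms 2} $\iff$ \ref{lin res sms 3} in Theorem \ref{lin res sms}. The same dictionary then yields the converse implications in parts \ref{R2 1} and \ref{R2 2}. No substantial obstacle is anticipated; the only point of care is to carry out everything over a field of characteristic $2$, as demanded by \ref{CF-2 main}, so that the translation between CF-chordality and linear resolutions is lossless.
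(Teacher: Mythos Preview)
Your proposal is correct and matches the paper's own argument essentially verbatim: the paper simply says to restrict Theorem \ref{lin res sms} to characteristic $2$ and apply Corollary \ref{CF-2 main}, which is precisely the dictionary you describe.
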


In the rest of the paper, we study if we can get some results similar to \ref{lin res sms} for having linear quotients
or being chordal instead of having a linear resolution. For having linear quotients we have:
\begin{thm}\label{lin q +}
Let $\cc$ be a $d$-clutter. Suppose that $I(\b\cc)$ has linear quotients. Then $I(\b{\cc^+})$, $I(\b{\cc-F})$  and
$I(\b{\cc-v})$ have linear quotients for each $F\in \sms(\cc^+)$ and $v\in \V(\cc)$.
\end{thm}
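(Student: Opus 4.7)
The plan is to construct explicit admissible orders for each of the three ideals, starting from a fixed admissible order $G_1,\ldots,G_t$ of the generators of $I(\b\cc)$ (each $G_i$ a $(d+1)$-subset of $V=\V(\cc)$).

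For $I(\b{\cc-v})$, the minimal generators correspond to $\b{\cc-v}=\{G\in\b\cc:v\notin G\}$, so I restrict the given order to this subsequence. If admissibility applied to a pair $G_{i_a},G_{i_b}$ from the subsequence (with $i_a<i_b$) produces $l\in G_{i_a}\sm G_{i_b}$ and $G_k$ with $k<i_b$ and $G_k\sm G_{i_b}=\{l\}$, then $v\in G_k$ would force $v\in G_k\sm G_{i_b}=\{l\}\se G_{i_a}$, contradicting $v\notin G_{i_a}$; hence $v\notin G_k$ and the witness stays within the restricted subsequence.

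For $I(\b{\cc-F})$ with $F\in\sms(\cc^+)$, I first observe $F\in\cc$ since $F$ is contained in some $(d+2)$-clique of $\cc$, all of whose $(d+1)$-subsets lie in $\cc$. Thus $\b{\cc-F}=\b\cc\cup\{F\}$, and I would append $F$ at the end of the order. The simpliciality of $F$ is what makes this work: since $\N_{\cc^+}[F]$ is a clique in $\cc^+$, every $(d+1)$-subset of $\N_{\cc^+}[F]$ lies in $\cc$, so for any $G_j\in\b\cc$ we have $G_j\not\se\N_{\cc^+}[F]$ and one can pick $l\in G_j\sm\N_{\cc^+}[F]\se G_j\sm F$. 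Then $Fl\notin\cc^+$ (else $l\in\N_{\cc^+}[F]$), hence some $(F\sm\{m\})\cup\{l\}$ lies in $\b\cc$ and plays the role of the required $G_k$ with $G_k\sm F=\{l\}$.

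The main obstacle is $I(\b{\cc^+})$. For each $H\in\b{\cc^+}$, some $(d+1)$-subset of $H$ is in $\b\cc$, so $\phi(H)=\min\{i:G_i\se H\}$ is well-defined. I would order the generators of $I(\b{\cc^+})$ primarily by $\phi(H)$, breaking ties by the extra vertex $H\sm G_{\phi(H)}$ under any fixed linear order on $V$. For $H_a<H_b$ in this order: if $\phi(H_a)=\phi(H_b)=i$, then $H_c:=H_a$ works directly since $H_a\sm H_b=\{u\}$ for $u=H_a\sm G_i$. If $\phi(H_a)=i<j=\phi(H_b)$, then by admissibility of the given order there exist $l'\in G_i\sm G_j$ and $k<j$ with $G_k=(G_j\sm\{m'\})\cup\{l'\}$ for some $m'\in G_j$. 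Writing $w=H_b\sm G_j$, the equality $l'=w$ would give $G_k\se H_b$ with $k<j$, contradicting $\phi(H_b)=j$; so $l'\neq w$, whence $l'\notin H_b$ but $l'\in G_i\se H_a$. Setting $H_c=G_k\cup\{w\}=(H_b\sm\{m'\})\cup\{l'\}$, we get $H_c\in\b{\cc^+}$ with $\phi(H_c)\leq k<j$, so $H_c$ precedes $H_b$ in the order and $H_c\sm H_b=\{l'\}$. The crux is that the minimality built into $\phi$ automatically rules out the one potentially obstructing scenario $l'=w$ in the admissibility verification, after which everything else reduces to the admissibility of the original order.
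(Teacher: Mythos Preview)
Your proof is correct and follows essentially the same approach as the paper's. Your function $\phi(H)=\min\{i:G_i\se H\}$ is exactly the block index the paper uses when it sets $\cc_i=\{F_iv:v\in V\sm F_i\}\sm(\cup_{j<i}\cc_j)$, and your key observation that $l'=w$ would contradict $\phi(H_b)=j$ is precisely the paper's argument that $v\neq l$ since otherwise $F_{i'j'}\in\cc_j$ for some $j\leq k$; the treatments of $I(\b{\cc-F})$ and $I(\b{\cc-v})$ are likewise identical to the paper's.
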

\begin{proof}
If we denote the ideal generated by all squarefree monomials in $I$ with degree $t$ by $I_{[t]}$, then it is easy to
see that $I(\b{\cc^+})= I(\b\cc)_{[d+2]}$. Hence the fact that $I(\b{\cc^+})$ has linear quotients follows from
\cite[Corollary 2.11]{solyman}. But for the convenience of the readers, we present a direct shorter proof for this.
Assume that $x_{F_1}, \ldots, x_{F_t}$ is an admissible order of $I(\b\cc)$.  Then $\b{\cc^+}=\{F_iv|v\in V\sm F_i,
1\leq i\leq t\}$. For each $1\leq i\leq t$, let $\cc_i= \{F_iv|v\in V\sm F_i\} \sm (\cup_{j<i} \cc_j)$ and suppose that
$\cc_i=\{F_{i1}, \ldots, F_{ir_i}\}$. We prove that $F_{11}, \ldots, F_{1{r_1}}, \ldots, F_{t1}, \ldots, F_{t{r_t}}$
corresponds to an admissible order for $I(\b{\cc^+})$.

Consider two circuits $F_{ij}, F_{i'j'}$ of $\b{\cc^+}$ with $i\leq i'$. If $i=i'$, then $|F_{ij}\sm F_{i'j'}|=1$ and
hence the admissibility condition holds trivially for them. Thus we assume $i<i'$. Therefore by assumption there is a
$l\in F_i\sm F_{i'}$ and a $k< i'$ such that $F_k\sm F_{i'}= \{l\}$. Let $F_{i'j'}\sm F_{i'}=\{v\}$. Note that as
$F_{i'j'}\in \cc_{i'}$, there is no $j<i'$ with $F_{i'j'}\in \cc_j$ by the definition of $\cc_i$'s. So $v\neq l$, else
$F_{i'j'}= F_k\cup (F_{i'} \sm F_k)$ and as $|F_{i'}\sm F_k|=1$, we have $F_{i'j'} \in \cc_j$ for some $j\leq k$, a
contradiction. Consequently, $v\notin F_k$ and $F_kv$ which is a circuit of $\b{\cc^+}$, should appear in some $\cc_j$
with $j\leq k$. Noting that $F_kv\sm F_{i'j'}= \{l\}$, the proof is concluded.

Now assume that $F\in \sms(\cc^+)$. We show that if we add $x_F$ to an admissible order of $I(\b\cc)$, we get an
admissible order of $I(\b{\cc-F})= I(\b\cc)+ \lg x_F \rg$. We just need to show if $G\in \b\cc$, then there is a $G'\in
\b\cc$ and an $l\in G$ such that $G\sm F=\{l\}$. Since $\N_{\cc^+}[F]$ is a clique of $\cc$, $G\not\se \N_{\cc^+}[F]$,
say $l\in G\sm \N_{\cc^+}[F]$. Then $Fl\notin \cc^+$ and hence there is a $G'\se Fl$ with $G'\in \b\cc$. So $G'\sm
F=\{l\}$, as required.

Finally, note that if $x_{F_1}, \ldots, x_{F_t}$ is an admissible order for $I(\b\cc)$, then by deleting $x_{F_i}$'s
with $v\in F_i$, we get an admissible order for $I(\b{\cc-v})$ and $I(\b{\cc-v})$ has linear quotients.
\end{proof}

\begin{ex}
Let $\cc$ be the clutter of Example \oldref{chorded octa}. It is not hard to check that the following is an admissible
order for $I(\b\cc)$: 162, 163, 164, 165, 124, 624, 245, 234. Thus by the previous result, $I(\b{\cc^+})$ has linear
quotients. Indeed, by the proof of \ref{lin q +}, we get the following admissible order for $I(\b{\cc^+})$: 1623, 1624,
1625, 1634, 1635, 1645, 1243, 1245, 6243, 6245, 2453. Also note that the assumption  that $F$ is simplicial, is crucial
in \ref{lin q +}. For example, $I(\b{\cc-135})$ has not a linear resolution and hence has not linear quotients.
\end{ex}

We do not know whether the converse of the above theorem is correct or if a statement similar to \ref{lin res sms 1}
\give\ \ref{lin res sms 2} of \ref{lin res sms} holds for having linear quotients.

Next consider the ``chordal version'' of \ref{lin res sms}, that is, consider the statements obtained by replacing
``$I(\b\cd)$ has a linear resolution over K'' with ``$\cd$ is chordal'' in the three parts of \ref{lin res sms}, where
the replacement occurs for all clutters $\cd$ appearing in these assertions. Then clearly \ref{lin res sms 3} \give\
\ref{lin res sms 1} holds for the chordal version. Also \ref{chordal +} shows that part of \ref{lin res sms 2} \give\
\ref{lin res sms 3} is true for chordality. We will also show that if $\cc$ is chordal and $F\in \sms(\cc^+)$, then
$\lg \cc-F \rg$ is $d$-chorded, which is weaker than being chordal. But before proving this, we utilize \ref{lin q +}
to show that if the chordal version of \ref{lin res sms 3} \give\ \ref{lin res sms 2} (or \ref{lin res sms 1} \give\
\ref{lin res sms 2}) holds, then we can reduce proving statement \ref{B} to verifying it only for clutters with empty
ascent.

In what follows, by a \emph{free maximal subcircuit} of $\cc$, we mean a maximal subcircuit which is
contained in exactly one circuit of $\cc$. In the particular case that $\dim \cc=1$, that is $\cc$ is a
graph, free maximal subcircuits are exactly leaves (or free vertices) of the graph. Also a simplicial complex
$\Delta$ is called \emph{extendably shellable}, if any shelling of a subcomplex of $\Delta$ could be
continued to a shelling of $\Delta$. To see a brief literature review of this concept and some related
results consult \cite{strong chordal, bjorner}.
\begin{cor}\label{main}
Consider the following statements and also statement \ref{B} of Section 2 on a uniform clutter $\cc$.
\def\theenumi{\Alph{enumi}}
\begin{enumerate}
\setcounter{enumi}{1}
\item \label{C} If $\tohi\neq \cc^+$ is chordal and $\cc-F$ and $\cc-v$ are chordal for each $F\in \sms(\cc^+)$ and
    $v\in \V(\cc)$, then $\sms(\cc)\neq \tohi$.
%\item \label{D} If $\cc^+=\tohi$ and $I(\b \cc)$ has a linear resolution over every field, then $\cc$ has a free
%    maximal subcircuit.
%
%\setcounter{enumi}{3} \def\theenumi{\Alph{enumi}$'$}
\item \label{D'} If $\cc^+=\tohi$ and $I(\b \cc)$ has linear quotients, then $\cc$ has a free maximal subcircuit.

\item \label{E} Simon's Conjecture (\cite[Conjecture 4.2.1]{simon}): Every $d$-skeleton of a simplex is extendably
    shellable.
\end{enumerate}
\def\theenumi{\roman{enumi}}
Then: \ref{C} + \ref{D'} \give\ \ref{B} \give\ \ref{D'} + \ref{E}.
\end{cor}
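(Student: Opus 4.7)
The plan is to split the corollary into its two implications. Both the forward implication $\ref{C}+\ref{D'}\give\ref{B}$ and the implication $\ref{B}\give\ref{E}$ will be reduced to a single appending lemma: if $e\in\sms(\cc)$ and $v\in\N_\cc[e]\sm e$, then appending $x_{ev}$ to any admissible order of $I(\b\cc)$ is an admissible order of $I(\b{\cc\sm\{ev\}})$. To prove this lemma, for any $G\in\b\cc$ I need $l\in G\sm(ev)$ and an earlier $H$ with $H\sm(ev)=\{l\}$; the clique property of $\N_\cc[e]$ in $\cc$ together with $G\in\b\cc$ forces $G\not\se\N_\cc[e]$, so I may choose $l\in G\sm\N_\cc[e]$, and then $el\notin\cc$ (since $l\notin\N_\cc[e]$), so $H:=el\in\b\cc$ satisfies $H\sm(ev)=\{l\}$.

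For $\ref{B}\give\ref{D'}$, assume $\cc^+=\tohi$ and $I(\b\cc)$ has linear quotients. By \ref{B}, $\cc$ is chordal, so pick $e\in\sms(\cc)$. The hypothesis $\cc^+=\tohi$ forbids $(d+2)$-cliques and hence forces $|\N_\cc[e]|\leq d+1$; conversely $e\in\MS(\cc)$ sits in at least one circuit, so $|\N_\cc[e]|\geq d+1$ too, and equality makes $e$ contained in exactly one circuit, i.e., free. For $\ref{B}\give\ref{E}$, let $F_1,\ldots,F_k$ be a shelling of some pure $d$-dimensional subcomplex of the $d$-skeleton of the simplex on $[n]$, and let $\cc$ be the $d$-clutter whose circuits are those $(d+1)$-subsets of $[n]$ not among the $F_i$. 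Then $I(\b\cc)=\lg x_{F_1},\ldots,x_{F_k}\rg$ and the given shelling is precisely an admissible order. \ref{B} makes $\cc$ chordal, so pick $e\in\sms(\cc)$, choose any $v\in\N_\cc[e]\sm e$, and set $F_{k+1}=ev$; the appending lemma shows this is a valid extension. Iterating this step $|\cc|$ many times exhausts the facets and produces a full shelling of the $d$-skeleton.

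For $\ref{C}+\ref{D'}\give\ref{B}$, I induct lexicographically on $(n-d,|\cc|)$ with $I(\b\cc)$ assumed to have linear quotients. If $\cc^+=\tohi$, \ref{D'} supplies a free maximal subcircuit $e$, which is automatically simplicial because its unique containing circuit coincides with its closed neighborhood, hence trivially a clique. If $\cc^+\neq\tohi$, then \ref{lin q +} gives that $I(\b{\cc^+})$, $I(\b{\cc-F})$ for $F\in\sms(\cc^+)$, and $I(\b{\cc-v})$ for $v\in\V(\cc)$ all have linear quotients, and each of these clutters strictly precedes $\cc$ in the lex order: $n-d$ drops by one for $\cc^+$ and for $\cc-v$, while each $F\in\sms(\cc^+)$ lies inside a $(d+2)$-clique and is therefore itself a circuit of $\cc$, so $|\cc-F|<|\cc|$. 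The induction hypothesis then yields chordality in all three cases, and \ref{C} delivers some $e\in\sms(\cc)$. In either case, I apply the appending lemma iteratively to $ev_1,\ldots,ev_r$, where $\{v_1,\ldots,v_r\}=\N_\cc[e]\sm e$, to conclude that $I(\b{\cc-e})$ has linear quotients; the iteration is legitimate because $e$ remains simplicial at each intermediate step, since $\N_{\cc\sm\{ev_1,\ldots,ev_j\}}[e]=e\cup\{v_{j+1},\ldots,v_r\}$ is still a clique. Since $\cc-e$ has strictly fewer circuits and the same $n-d$, the induction hypothesis gives $\cc-e$ chordal, whence $\cc$ is chordal. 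The main obstacle lies in this preservation of simpliciality under successive circuit deletions, which is exactly where the clique structure of $\N_\cc[e]$ is used essentially.
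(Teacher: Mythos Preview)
Your proof of \ref{C}$+$\ref{D'}$\Rightarrow$\ref{B} is correct and in fact more self-contained than the paper's: the paper only establishes the weaker claim ``$\sms(\cc)\neq\tohi$ whenever $I(\b\cc)$ has linear quotients'' by the same induction, and then invokes \cite[Theorem 2.1]{our chordal} to pass from this to chordality. Your appending lemma, together with the check that $e$ remains simplicial in each intermediate clutter $\cc\sm\{ev_1,\ldots,ev_j\}$, supplies that passage directly. Your argument for \ref{B}$\Rightarrow$\ref{D'} coincides with the paper's.

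There is, however, a genuine error in your argument for \ref{B}$\Rightarrow$\ref{E}. You assert that a shelling order $F_1,\ldots,F_k$ of a pure subcomplex is ``precisely an admissible order'' for $\lg x_{F_1},\ldots,x_{F_k}\rg$. This is false. The correct correspondence (the Herzog--Hibi dictionary behind \cite[Proposition 8.2.5]{hibi}) is that $F_1,\ldots,F_k$ is a shelling of $\lg F_1,\ldots,F_k\rg$ if and only if $x_{[n]\sm F_1},\ldots,x_{[n]\sm F_k}$ is an admissible order; the roles of $F_j\sm F_i$ and $F_i\sm F_j$ are swapped under complementation. For a concrete failure: on $[7]$ the order $1256,\,1257,\,1237,\,1234$ is a shelling (each new facet meets the earlier ones in a single codimension-one face), yet it is not admissible, since at the last step the only earlier set at distance one is $1237$, and $7\notin 1256\sm 1234=\{5,6\}$.

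Your strategy is easily salvaged by taking complements: let $\cc$ be the $(n{-}d{-}2)$-clutter with $\b\cc=\{[n]\sm F_1,\ldots,[n]\sm F_k\}$, so that the given shelling really does yield an admissible order for $I(\b\cc)$; then \ref{B} and your appending lemma produce $F_{k+1}=[n]\sm(ev)$ extending the shelling, and iteration finishes as you describe. This is essentially the content of the results from \cite{strong chordal} that the paper cites rather than reproving.
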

\begin{proof}
Assume that \ref{C} and \ref{D'} hold. We claim that if $I(\b\cc)$ has linear quotients, then $\sms(\cc)\neq \tohi$.
Then it follows from \cite[Theorem 2.1]{our chordal} that \ref{B} is correct. To prove the claim, we use induction on
$(n-d, |\cc|)$ considered with lexicographical order. If $\cc^+=\tohi$, then as every free maximal subcircuit is
simplicial, the claim holds by \ref{D'}. If $\cc^+\neq \tohi$, then applying \ref{lin q +} and using the induction
hypothesis, we see that $\cc^+$, $\cc-F$ and $\cc-v$ are chordal for every $F\in \sms(\cc^+)$ and $v\in \V(\cc)$.
Consequently, the claim follows from \ref{C}.

Now suppose that statement \ref{B} is correct. If $\cc^+=\tohi$, then $|\N_\cc[e]|=d+1$  for every $e\in\sms(\cc)$ and
hence every simplicial maximal subcircuit of $\cc$ is a free maximal subcircuit. Therefore, \ref{D'} holds as a special
case of \ref{B}.  Finally \ref{B} \give\ \ref{E} follows from Theorem 2.5 and Corollary 3.7 of \cite{strong chordal}.
\end{proof}
%Note that \ref{C} is a purely combinatorial statement and the algebraic part of \ref{A} is taken into consideration
%only in \ref{D} with the extra (strong) assumption that $\cc$ has no cliques on more than $d+1$ vertices.

The ``chordal version'' of \ref{lin res sms 2} \give\ \ref{lin res sms 3} in \ref{lin res sms}, partly states that if
$\cc$ is chordal, then for every $F\in \sms(\cc^+)$, $\cc-F$ should be chordal. The author could neither prove nor
reject this in the general case, but we prove a weaker result. In particular, we will show in the case that $\dim
\cc=1$, this is true.
%
%So in the case that $\dim \cc=1$, $\cc$ is 1-chorded \ifof it is a chordal graph. Moreover, if $\cc$ is
%chordal, then by \cite[Theorem 6.1]{CF1}, $\cc$ is $d$-chorded.
%
We need the following lemmas which present an equivalent condition for being $d$-chorded.

\begin{lem}\label{union of face min}
The set of facets of a $d$-cycle is a disjoint union of the set of facets of a family of face-minimal $d$-cycles.
\end{lem}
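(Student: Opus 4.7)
The plan is to induct on $|\F(\Omega)|$. If $\Omega$ is already face-minimal, we are done by taking the trivial family $\{\Omega\}$. Otherwise, by definition of face-minimality, there exists a $d$-cycle $\Omega_1$ with $\F(\Omega_1) \subsetneq \F(\Omega)$. I would then look at the ``complementary'' piece: let $\Omega_2 = \lg \F(\Omega) \sm \F(\Omega_1) \rg$. This is a pure $d$-dimensional simplicial complex, but it need not be $d$-path connected, so I cannot apply induction to it directly.

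The key step is to show that every $d$-path connected component $C$ of $\Omega_2$ is itself a $d$-cycle; once this is done, induction applied to $\Omega_1$ and to each such component $C$ (all of which have strictly fewer facets than $\Omega$) produces the required decomposition, and disjointness of the resulting family follows from the facts that $\F(\Omega_1)$ and $\F(\Omega_2)$ are disjoint by construction and that distinct $d$-path connected components have disjoint facet sets.

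For the key step, $C$ is pure $d$-dimensional and $d$-path connected by construction, so only the even-incidence condition on $(d-1)$-faces must be checked. The crucial observation is that if $e$ is a $(d-1)$-face of some facet $F\in C$ and $F'\in \F(\Omega_2)$ also contains $e$, then $|F\cap F'|\geq d$, so $F$ and $F'$ lie in the same $d$-path connected component, i.e.\ $F'\in C$. Hence the number of facets of $C$ containing $e$ equals the number of facets of $\Omega_2$ containing $e$, which in turn equals (facets of $\Omega$ containing $e$) minus (facets of $\Omega_1$ containing $e$); both of these numbers are even because $\Omega$ and $\Omega_1$ are $d$-cycles, so their difference is even, as required.

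The main obstacle I anticipate is precisely this component-by-component verification: one has to be careful that the even-incidence condition does not leak across components, which is exactly what the observation $|F\cap F'|\geq d \Rightarrow F,F'$ in the same component rules out. After that, the induction and the disjointness bookkeeping are routine.
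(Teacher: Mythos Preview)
Your proposal is correct and follows essentially the same approach as the paper: induct on $|\F(\Omega)|$, peel off a proper sub-$d$-cycle, and verify that each $d$-path connected component of the remainder is again a $d$-cycle by the even-incidence subtraction argument. The only (minor) difference is that you also apply the induction hypothesis to the peeled-off cycle $\Omega_1$, whereas the paper simply adds $\Omega'$ to the final family; your version is in fact slightly cleaner, since the paper never explicitly arranges for $\Omega'$ itself to be face-minimal.
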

\begin{proof}
Suppose that $\Omega$ is a $d$-cycle. We use induction on $|\F(\Omega)|$. If $\Omega$ is face-minimal, then the result
is trivial. Thus we can assume that there is a $d$-cycle $\Omega'$ whose facets form a strict subset of $\F(\Omega)$.
Let $\Gamma= \lg \F(\Omega)\sm \F(\Omega') \rg$. Note that each $(d-1)$-face of $\Gamma$ is contained in an even number
of facets of $\Omega$ and an even number of facets of $\Omega'$. Hence each $(d-1)$-face of $\Gamma$ is contained in an
even number of facets of $\Gamma$. Suppose that $\Gamma_1, \ldots, \Gamma_t$ are $d$-path connected components of
$\Gamma$, that is, the maximal pure $d$-dimensional subcomplexes of $\Gamma$ which are $d$-path connected. Note that if
$i\neq j$, then $\Gamma_i$ and $\Gamma_j$ do not share a $(d-1)$-face, else $\Gamma_i \cup \Gamma_j$ is $d$-path
connected, contradicting maximality of both $\Gamma_i$ and $\Gamma_j$. Consequently, for every $i$, each $(d-1)$-face
of $\Gamma_i$ is contained in an even number of $d$-faces of $\Gamma_i$ and $\Gamma_i$ is a $d$-cycle. Noting that
$\F(\Gamma_i)$ are mutually disjoint, and by applying induction hypothesis on $\Gamma_i$'s, it follows that
$\F(\Gamma)$ is a disjoint union of facets of a family $\ca$ of face-minimal $d$-cycles. So $\F(\Omega)$ is the
disjoint union of facets of the family $\ca \cup\{\Omega'\}$ of face-minimal $d$-cycles, as required.
\end{proof}

\begin{lem} \label{d-chorded}
Let $\cc$ be a $d$-clutter. The simplicial complex $\lg \cc \rg$ is $d$-chorded \ifof the facets of every $d$-cycle
$\Omega$ of $\lg \cc \rg$ is the symmetric difference of a family of complete subclutters of $\cc$, each on a
$(d+2)$-subset of $\V(\Omega)$.
\end{lem}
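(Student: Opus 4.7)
The plan is to prove both implications via strong induction on $|\V(\Omega)|$, combined with Lemma \oldref{union of face min} to reduce general $d$-cycles to face-minimal ones. The argument rests on two preliminary observations. First, any $d$-cycle on exactly $d+2$ vertices is the complete $d$-clutter on those vertices: any two $(d+1)$-subsets of a $(d+2)$-set share a $(d-1)$-face, so the parity-even condition together with $d$-path-connectedness forces all $d+2$ possible facets to appear once one of them does. Second, a face-minimal $d$-cycle $\Omega$ with $|\V(\Omega)| > d+2$ is never $d$-complete on $\V(\Omega)$: if $k = |\V(\Omega)|$, each $(d-1)$-face of the complete clutter on $\V(\Omega)$ lies in $k-d$ facets, so the complete clutter fails to be a cycle when $k-d$ is odd, and when $k-d$ is even and $\geq 4$ it contains the strictly smaller subcycle supported on any $(d+2)$-subset.

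For the direction \rgive, let $\Omega$ be a face-minimal $d$-cycle of $\lg \cc \rg$ that is not $d$-complete. The second preliminary gives $|\V(\Omega)| \geq d+3$. By hypothesis, the facet set of $\Omega$ equals $\Omega_1 \triangle \cdots \triangle \Omega_k$, where each $\Omega_i$ is the complete subclutter on some $(d+2)$-subset of $\V(\Omega)$. Each $\Omega_i$ is itself a $d$-cycle (the boundary of a $(d+1)$-simplex) contained in $\lg \cc \rg$. Conditions (a)--(c) of the $d$-chorded definition match the symmetric difference relation exactly; condition (d) follows from $|\V(\Omega_i)| = d+2 < |\V(\Omega)|$; and $k \geq 2$ because $k=1$ would force $\Omega$ itself to be a complete $(d+2)$-subclutter, hence $d$-complete on $\V(\Omega)$.

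For the direction \give, assume $\lg \cc \rg$ is $d$-chorded and induct on $|\V(\Omega)|$. The base case $|\V(\Omega)| = d+2$ is immediate from the first preliminary: $\Omega$ is itself a single complete $(d+2)$-subclutter. For the inductive step, apply Lemma \oldref{union of face min} to write $\F(\Omega)$ as the disjoint union, and hence symmetric difference, of facet sets of face-minimal $d$-cycles $\Omega_1', \ldots, \Omega_m'$, each a subcomplex of $\lg \cc \rg$. If $|\V(\Omega_i')| < |\V(\Omega)|$, the induction hypothesis gives the desired decomposition of $\Omega_i'$. Otherwise $\V(\Omega_i') = \V(\Omega)$, and by the second preliminary $\Omega_i'$ is not $d$-complete; $d$-chordedness then produces $\Omega_i' = \Omega_{i,1} \triangle \cdots \triangle \Omega_{i,k_i}$ with $\V(\Omega_{i,j}) \su \V(\Omega_i') = \V(\Omega)$, so $|\V(\Omega_{i,j})| < |\V(\Omega)|$ and the induction hypothesis applies to each $\Omega_{i,j}$. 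Assembling all these expressions via associativity and commutativity of $\triangle$ (duplicates cancel in pairs) yields the required decomposition of $\Omega$.

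The main obstacle is the two preliminary observations about $d$-cycles on $d+2$ vertices, which jointly anchor the base case, certify condition (d), and ensure the induction strictly decreases vertex count. Once they are established, the rest is careful bookkeeping combining Lemma \oldref{union of face min} with $d$-chordedness; the only real subtlety is that we must invoke $d$-chordedness precisely in the edge case where a face-minimal component of $\Omega$ happens to span all of $\V(\Omega)$, rather than immediately after the reduction to face-minimal pieces.
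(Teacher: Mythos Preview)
Your proof is correct and follows essentially the same route as the paper's own argument---induction on $|\V(\Omega)|$, reduction to face-minimal cycles via Lemma \oldref{union of face min}, and an appeal to the $d$-chorded definition when the cycle is not $d$-complete---while usefully making explicit the two facts about $(d+2)$-vertex cycles that the paper leaves implicit. One small slip: in the (\rgive) direction it is your \emph{first} preliminary (contraposed), not the second, that yields $|\V(\Omega)|\geq d+3$ from ``$\Omega$ is not $d$-complete''.
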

\begin{proof}
(\give): By induction on $|\V(\Omega)|$. Because of \ref{union of face min}, we can assume that $\Omega$ is
face-minimal. If $\Omega$ is $d$-complete, then by face-minimality $|\V(\Omega)|=d+2$ and we are done. Thus we assume
that $\Omega$ is not $d$-complete. Suppose that $\Omega_i$'s are $d$-cycles in $\lg \cc \rg$ satisfying
\ref{a}--\ref{d} in the definition of a $d$-chorded complex. Let $\cd_i=\F(\Omega_i)$. Since each $d$-face in
$\cup_{i=1}^k \cd_i\sm \F(\Omega)$ appears in an even number of $\cd_i$'s by \ref{c}, such $d$-faces are not in the
symmetric difference of $\cd_i$'s. Also by \ref{a} and \ref{b}, we see that each element of $\F(\Omega_i)$ is in an odd
number of $\cd_i$'s and hence is in their symmetric difference. Therefore, $\F(\Omega)=\cd_1 \oplus \cdots \oplus
\cd_k$, where $\oplus$ denotes symmetric difference. Also by \ref{d} of the definition of a $d$-chorded complex, each
$\lg \cd_i \rg$ is a cycle on an smaller number of vertices in $\V(\Omega)$. Thus by applying the induction hypothesis
on $\lg \cd_i \rg$'s we get a decomposition of $\Omega$ as the symmetric difference of a set of complete subclutters of
$\cc$ on $(d+2)$-subsets of $\V(\Omega)$.

(\rgive): Suppose that $\Omega$ is a face-minimal cycle of $\lg \cc \rg$ which is not $d$-complete. Then
$\F(\Omega)= \cd_1 \oplus \cdots \oplus \cd_t$ where $\cd_i$'s are complete subclutters of $\cc$ with size
$d+2$ and $\V(\cd_i)\se \V(\Omega)$. If we set $\Omega_i=\lg \cd_i \rg$, then $\Omega_i$'s clearly satisfy
\ref{a}--\ref{d} of the definition of a $d$-chorded clutter. This means that $\lg \cc \rg$ is $d$-chorded.
\end{proof}

\begin{thm}\label{C-F chorded}
Let $\cc$ be a $d$-clutter. Suppose that $\lg \cc \rg$ is $d$-chorded (for example, if $\cc$ is chordal) and $F\in
\sms(\cc^+)$. Then $\lg \cc-F \rg$ is $d$-chorded.
\end{thm}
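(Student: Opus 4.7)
My plan is to use Lemma \ref{d-chorded} as the working criterion: it suffices to show that every $d$-cycle $\Omega \se \lg \cc - F \rg$ has its facet set expressible as a symmetric difference of complete subclutters of $\cc - F$, each on a $(d+2)$-subset of $\V(\Omega)$. Since $\Omega$ is also a $d$-cycle in $\lg \cc \rg$, the hypothesis already gives a decomposition $\F(\Omega) = \cd_1 \oplus \cdots \oplus \cd_k$ where each $\cd_i$ is the complete clutter on a $(d+2)$-subset $A_i \se \V(\Omega)$ of $\cc$. The task is to rewrite this decomposition so that none of the pieces contains $F$ as a circuit.

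For the bookkeeping, set $B = \N_{\cc^+}[F]$; since $F \in \sms(\cc^+)$, $B$ is a clique of $\cc^+$, and a short argument (every $(d+1)$-subset of $B$ lies in some $(d+2)$-subset of $B$, which is a $(d+1)$-clique of $\cc$) shows $B$ is also a clique of $\cc$. If $\cd_i$ contains $F$, then $A_i \supseteq F$ is a clique of $\cc$ of size $d+2$, so $A_i = F \cup \{v_i\}$ with $Fv_i \in \cc^+$, i.e.\ $v_i \in B \sm F$. Since $F \notin \F(\Omega)$, the number of indices $i$ with $F \in \cd_i$ is even, so these can be paired up.

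The key step is to eliminate a pair of offending pieces. Given two indices $i_1, i_2$ with $A_{i_1} = F \cup \{v_1\}$ and $A_{i_2} = F \cup \{v_2\}$ ($v_1 \neq v_2$), I will replace $\{\cd_{i_1}, \cd_{i_2}\}$ by the family $\{\cd_{C_w} : w \in F\}$, where $C_w = (F \sm \{w\}) \cup \{v_1, v_2\}$ and $\cd_{C_w}$ is the complete clutter on $C_w$. Each $C_w$ is a $(d+2)$-subset of $B$, hence a clique of $\cc$, and $F \not\se C_w$ because $w \notin C_w$; so $\cd_{C_w}$ is a complete subclutter of $\cc - F$ on a $(d+2)$-subset of $\V(\Omega)$. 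A direct face-count will verify that $\cd_{i_1} \oplus \cd_{i_2} = \bigoplus_{w \in F} \cd_{C_w}$: the faces $(F \sm \{w\}) \cup \{v_j\}$ ($w \in F$, $j = 1,2$) appear once on each side, while the ``mixed'' faces $(F \sm \{w, w'\}) \cup \{v_1, v_2\}$ appear in exactly two of the $\cd_{C_w}$'s (namely $\cd_{C_w}$ and $\cd_{C_{w'}}$) and hence cancel, and the face $F$ cancels on the left.

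Carrying out this swap for each pair of offending pieces produces a decomposition of $\F(\Omega)$ by complete subclutters of $\cc - F$ on $(d+2)$-subsets of $\V(\Omega)$, as required. I expect the only non-routine point to be verifying that $C_w$ is indeed a clique of $\cc$, which is why I first need to unpack $\sms(\cc^+)$ to obtain $B = \N_{\cc^+}[F]$ as a clique of $\cc$; everything else is an $\mathbb F_2$-chain calculation.
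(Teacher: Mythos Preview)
Your argument is correct and follows the same overall strategy as the paper: use Lemma~\oldref{d-chorded}, take a decomposition of $\F(\Omega)$ in $\cc$, observe that the pieces containing $F$ come in pairs, and eliminate each pair by a local replacement supported on $W=Fv_1v_2\se\V(\Omega)$. The difference lies in how the replacement is carried out. The paper argues indirectly: since $W\se \N_{\cc^+}[F]$, the induced clutter $\cc_W$ is complete, so $\cc_W-F$ is chordal by Lemma~\oldref{almost complete}, hence $I(\b{\cc_W-F})$ has a linear resolution, hence $\lg \cc_W-F\rg$ is $d$-chorded by \cite[Theorem 6.1]{CF1}, and then Lemma~\oldref{d-chorded} yields \emph{some} decomposition of $\cd_{i_1}\oplus\cd_{i_2}$ avoiding $F$. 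You instead write down the replacement explicitly: $\cd_{i_1}\oplus\cd_{i_2}=\bigoplus_{w\in F}\cd_{C_w}$ with $C_w=(F\sm\{w\})\cup\{v_1,v_2\}$. This identity is nothing but $\rnd^2=0$ over $\mathbb{F}_2$ applied to the full simplex on $W$ (the $(d+1)$-faces of that simplex are exactly $Fv_1$, $Fv_2$, and the $C_w$'s), so your verification by face-counting is immediate. Your route is more elementary and self-contained --- it bypasses Lemma~\oldref{almost complete}, the passage through linear resolutions, and the appeal to \cite{CF1} --- at the cost of a short explicit computation. One small point worth making explicit: if two offending pieces happen to coincide (i.e.\ $v_1=v_2$), they cancel directly, so you may indeed assume $v_1\neq v_2$ when performing the swap.
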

\begin{proof}
Suppose that $\Omega$ is a $d$-cycle of $\cc-F$ and $\cd= \F(\Omega)$. Since $\lg \cc \rg$ is $d$-chorded and
according to \ref{d-chorded}, there are complete subclutters $\cd_1, \ldots, \cd_t$ of $\cc$ on
$(d+2)$-subsets of $\V(\cd)$ such that $\cd= \oplus_{i=1}^t \cd_i$. We choose $\cd_i$'s in a way that $m=
|\{i|F\in \cd_i\}|$ is minimum possible. Assume that $m\geq 1$. Because $F\not \in \cd$, $m$ is at least 2.
Thus there are two $\cd_i$, say $\cd_1,\cd_2$, containing $F$. Therefore there exist $v_1 \neq v_2\in\V(\cd)$
such that $\V(\cd_i)= Fv_i$ for $i=1,2$. Let $W=Fv_1v_2$ and $\cc'= \cc_{W}$. Then since $F\in \sms(\cc^+)$
and $W\se \N_{\cc^+}(F)$, $\cc'$ is complete. Now if $\cd'= \cd_1 \oplus \cd_2$, then $\lg \cd' \rg$ is a
$d$-cycle in $\lg \cc'-F \rg$. By \ref{almost complete}, $\cc'-F$ is chordal and hence has a linear
resolution over every field. Thus by \cite[Theorem 6.1]{CF1} $\lg \cc'-F \rg$ is $d$-chorded. Consequently
according to \ref{d-chorded}, $\cd'=\cd'_1 \dis \cdots \dis \cd'_{t'}$, where $\cd'_i$'s are complete
subclutters of $\cc'-F$ (and hence $\cc$) on $(d+2)$-subsets of $\V(\cd')\se \V(\cd)$. By replacing
$\oplus_{i=1}^{t'} \cd'_i$ instead of $\cd_1 \oplus \cd_2$ in the decomposition of $\cd$, we get a
decomposition with less terms containing $F$. This contradicts the choice of the decomposition of $\cd$ and
hence $m=0$.  Therefore, the decomposition $\cd= \oplus_{i=1}^t \cd_i$ is in $\cc-F$ and the result follows
by \ref{d-chorded}.
\end{proof}

It should be mentioned that the previous result is not correct for arbitrary $F\in \MS(\cc^+)$. This can be
seen by noting that $I(\b{\cc-135})$ is not 2-chorded, where $\cc$ is as in Example \oldref{chorded octa}.

If $\cc$ is a graph, we call a $F\in \sms(\cc^+)$ a \emph{simplicial edge} of $\cc$.
\begin{cor}\label{chordal edge order}
Suppose that $\cc$ is a chordal graph and $F$ is a simplicial edge of $\cc$. Then $\cc-F$ is chordal. Hence there is an
ordering $F_1, \ldots, F_t, F_{t+1}, \ldots, F_m$ of edges of $\cc$ such that for $1\leq i\leq t$, $F_i$ is a
simplicial edge of the chordal graph $\cc_i= \cc-F_1-\cdots-F_{i-1}$ and for $t<i\leq m$, $\cc_i$ is a tree and $F_i$
is a leaf edge of $\cc_i$.
\end{cor}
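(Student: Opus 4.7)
The first assertion follows by specializing \ref{C-F chorded} to $d=1$. For a graph $\cc$, 1-chordedness of $\lg\cc\rg$ is equivalent to chordality: using \ref{d-chorded} and noting that face-minimal 1-cycles are simple graph cycles while complete subclutters on $3$ vertices are triangles, the condition says every cycle of $\cc$ is a symmetric difference of triangles of $\cc$. A short induction on cycle length via chord-splitting shows this is equivalent to the classical chord condition, which by the preliminaries matches the paper's notion of chordality. Thus $\cc$ chordal yields $\lg\cc\rg$ 1-chorded; \ref{C-F chorded} then gives $\lg\cc-F\rg$ 1-chorded, hence $\cc-F$ chordal.

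For the ordering, I would build it greedily in two phases. Set $\cc_1=\cc$ and, while $\cc_i^+\ne\tohi$, choose $F_i\in\sms(\cc_i^+)$ and let $\cc_{i+1}=\cc_i-F_i$; by the first part each $\cc_{i+1}$ is again chordal, so the procedure remains inside the class of chordal graphs. Let $t$ denote the final such index, so $\cc_{t+1}^+=\tohi$. Then $\cc_{t+1}$ is chordal and triangle-free. Any cycle in such a graph would have length $\ge 4$ and admit a chord, splitting it into two strictly shorter cycles; iterating produces a triangle, a contradiction. Thus $\cc_{t+1}$ is a forest, and I can finish the ordering by stripping off a leaf edge at each subsequent step, since removing a leaf edge from a forest leaves a forest.

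The substantive step is proving that a chordal graph $\cc$ with $\cc^+\ne\tohi$ always has a simplicial edge, so that Phase 1 never gets stuck. I would argue by induction on $|\V(\cc)|$. Every chordal graph has a simplicial vertex $v$ (Dirac). If $\deg_\cc(v)\ge 2$, then the closed neighborhood $\N_\cc[v]$ is a clique of size $\ge 3$, so for any $u\in\N_\cc(v)$ the edge $F=\{u,v\}$ lies in a triangle and $\N_{\cc^+}[F]=F\cup(\N_\cc(u)\cap\N_\cc(v))\se\N_\cc[v]$ is a clique; thus $F\in\sms(\cc^+)$. Otherwise every simplicial vertex of $\cc$ has degree $\le 1$; pick such a $v$. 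Since $v$ lies in no triangle, $(\cc-v)^+=\cc^+\ne\tohi$ and $\cc-v$ is chordal as an induced subgraph. By induction $\cc-v$ has a simplicial edge $F$, and since $v\notin F$ and $v$ shares no triangle with any vertex of $F$, $\N_{\cc^+}[F]=\N_{(\cc-v)^+}[F]$ remains a clique in $\cc$, so $F$ is simplicial in $\cc$ as well.
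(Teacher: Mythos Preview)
Your argument is correct, but the second half takes a different route from the paper. Both proofs obtain the first assertion by specializing \ref{C-F chorded} to $d=1$; you are more explicit than the paper in spelling out why, for graphs, $1$-chordedness of $\lg\cc\rg$ coincides with the classical chordality condition (the paper simply asserts ``the first statement is just \ref{C-F chorded} in the case that $\dim\cc=1$''). For the ordering, however, the paper does not invoke Dirac's theorem at all. Instead it appeals to \ref{chordal +}: since $\cc_i$ is chordal, so is $\cc_i^+$, and a nonempty chordal clutter automatically has $\sms\neq\tohi$; hence $\sms(\cc_i^+)\neq\tohi$ whenever $\cc_i^+\neq\tohi$. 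This one-line use of \ref{chordal +} replaces your entire inductive Dirac argument. Your approach has the virtue of being self-contained and elementary (it only uses graph theory, not the general machinery of \ref{chordal +}), while the paper's is considerably shorter because it leverages results already established in the section. Both then finish the same way: once $\cc^+=\tohi$, a chordal triangle-free graph is a forest and one strips leaf edges.
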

\begin{proof}
The first statement is just \ref{C-F chorded} in the case that $\dim \cc=1$. Now if $\cc$ is chordal and $\cc^+\neq
\tohi$, then $\sms(\cc^+)\neq \tohi$ by \ref{chordal +}. So starting with $\cc$, we can delete simplicial edges until
we reach a chordal graph $\cc'$ with $(\cc')^+=\tohi$. But a chordal graph without any cliques on more than two
vertices is a tree and hence we can delete leaf edges from $\cc'$ until there is no more edges and the statement is
established.
\end{proof}

Let $\cc$ be the graph in \oldref{Fig}. Then $\cc^+$ and $\cc-F$ are chordal and also $I(\b{\cc^+})$ and
$I(\b{\cc-F})$ have linear quotients for each $F\in \sms(\cc^+)$. Despite this $\cc$ is not chordal, since
$\cc-e$ is a cycle of length 5. Thus the converses of \ref{C-F chorded} and \ref{chordal +}  do not hold.
Also edges of $\cc$ can be ordered as in \ref{chordal edge order}, so the converse of \ref{chordal edge
order} is not true, either.

\begin{figure}[!ht]
  \centering
  \includegraphics[scale=1.2]{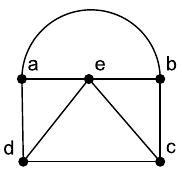}
  \caption{A non-chordal graph}\label{Fig}
\end{figure}

Clearly Statement \ref{D'} holds when $\dim \cc=1$. Using the concept of $d$-cycles, we show that \ref{D'}
holds when $\dim \cc^\vee \leq 1$ or equivalently, if $n-d\leq 3$. As in \cite{our chordal}, by a
\emph{CF-tree }we mean a $d$-clutter $\cc$ with the property that $\lg \cc \rg$ has no $d$-cycles.
\begin{prop}\label{n<=d+3}
Assume that $\cc$ is a $d$-clutter on $n$ vertices with $n\leq d+3$. If $\cc^+=\tohi$ and $I(\b\cc)$ has a linear
resolution over every field, then $\cc$ is chordal. In particular, statement \ref{D'} of \ref{main} holds for $\cc$.
\end{prop}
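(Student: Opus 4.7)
The plan is induction on $|\cc|$, with the base $|\cc|=0$ trivial. At each step, I aim to exhibit a free (hence, since $\cc^+=\tohi$, simplicial) maximal subcircuit $e$ of $\cc$ and verify that $\cc-e$ still satisfies the two hypotheses, so that the induction hypothesis yields chordality of $\cc-e$ and therefore of $\cc$. The ``in particular'' consequence for statement \ref{D'} will then follow immediately, since linear quotients force a linear resolution over every field and simpliciality coincides with freeness when $\cc^+=\tohi$.

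The small regime $n\le d+2$ is handled directly. For $n\le d+1$ the clutter has at most one circuit. For $n=d+2$, writing $V=\{v_0,\ldots,v_{d+1}\}$ and $\cc=\{V\sm\{v_i\}\mid i\in S\}$ for some $S\se\{0,\ldots,d+1\}$, the hypothesis $\cc^+=\tohi$ together with $\cc\neq\tohi$ forces $S$ to be proper and nonempty; picking $i\in S$ and $j\notin S$ yields a free $e=V\sm\{v_i,v_j\}$, and it is routine (via the parametric description) to check that $\cc-e$ retains both hypotheses.

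The substantive case is $n=d+3$, where the circuits of $\cc^\vee$ have size $n-d-1=2$ and hence form a graph $G$ on $V$. The strategy is to translate the hypotheses into graph-theoretic conditions on $G$: by the Eagon-Reiner theorem together with \ref{transition}, ``$I(\b\cc)$ has a linear resolution over every field'' is equivalent to ``$\lg\cc^\vee\rg$ is Cohen-Macaulay'', which for the pure 1-dimensional complex $\lg G \rg$ amounts to $G$ being connected; a short combinatorial check identifies $\cc^+=\tohi$ with the absence of isolated vertices of $V$ in $G$. Thus $G$ is a connected graph on the full vertex set $V$, and $G\neq K_n$ (otherwise $\b\cc$ would contain every $(d+1)$-subset of $V$, forcing $\cc=\tohi$). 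A further direct verification shows that a $d$-subset $e$ is a free maximal subcircuit of $\cc$ precisely when the three vertices of $V\sm e$ induce a $P_3$ in $G$; such an induced $P_3$ is produced by taking the first three vertices of a shortest path in $G$ between any two nonadjacent vertices.

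Finally, I verify that $\cc-e$ preserves the hypotheses. The unique circuit $F$ containing $e$ corresponds to a non-edge $V\sm F$ of $G$, so passing to $\cc-e$ adjoins this pair as an edge of $G$, yielding a graph $G'$ that is still connected and still has no isolated vertex; this gives $(\cc-e)^+=\tohi$ and linear resolution over every field, closing the induction. The main obstacle is setting up the graph dictionary in the $n=d+3$ case carefully---correctly translating both hypotheses into field-independent conditions on $G$ and pinning down the exact correspondence between free maximal subcircuits of $\cc$ and induced $P_3$'s in $G$---after which the existence of the induced $P_3$ is an elementary consequence of connectedness and non-completeness.
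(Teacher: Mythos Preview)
Your argument is correct and follows a genuinely different route from the paper. The paper's proof is indirect: it invokes \cite[Theorem 6.1]{CF1} to deduce that $\lg\cc\rg$ is $d$-chorded, then uses Lemma~\oldref{d-chorded} together with $\cc^+=\tohi$ to conclude that $\lg\cc\rg$ contains no $d$-cycles (so $\cc$ is a CF-tree), and finally appeals to \cite[Corollary 3.7]{our chordal} to obtain chordality from the CF-tree property when $n\le d+3$. Your approach instead passes directly to the Alexander dual: for $n=d+3$ the clutter $\cc^\vee$ is a graph $G$, and your dictionary (linear resolution $\iff$ $G$ connected via Eagon--Reiner; $\cc^+=\tohi\iff$ no isolated vertex; free maximal subcircuit $\iff$ induced $P_3$ on the complementary triple) reduces the problem to the elementary fact that a connected non-complete graph contains an induced $P_3$. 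The paper's route situates the result within the CF-tree framework and is terse given the cited machinery; your route is self-contained, avoids the $d$-chorded theory and the external citation for CF-trees, and has the virtue of \emph{explicitly} producing the free maximal subcircuit, so that statement~\ref{D'} is witnessed constructively rather than deduced after the fact. One incidental simplification: your check that $(\cc-e)^+=\tohi$ via ``$G'$ has no isolated vertex'' is fine, but it is immediate anyway since $\cc-e\subsetneq\cc$ forces $(\cc-e)^+\se\cc^+=\tohi$.
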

\begin{proof}
Suppose that $\cc^+=\tohi$  and $I(\b\cc)$ has a linear resolution over every field. According to
\cite[Theorem 6.1]{CF1}, $\lg \cc \rg$ is $d$-chorded. But since $\cc$ has no cliques of size $d+2$, it
follows from \ref{d-chorded} that $\lg \cc \rg$ has no $d$-cycles, that is, $\cc$ is a CF-tree. Now the
result follows from \cite[Corollary 3.7]{our chordal}.
\end{proof}
The proof of Corollary 3.7 of \cite{our chordal} uses Alexander dual. We end this paper mentioning that more generally,
one can get a statement equivalent to \ref{D'} by passing to the Alexander dual of $\lg \cc \rg$. Indeed, by arguments
quite similar to \cite[Theorem 3.6]{our chordal} one can see that \ref{D'} holds for all $d$-clutters on $n$ vertices,
\ifof statement (ii) of \cite[Theorem 3.6]{our chordal} holds, when we replace ``Cohen-Macaulay over $\z_2$'' in that
statement with ``shellable''.

\paragraph{Acknowledgements}
The author would like to thank Rashid Zaare-Nahandi and Ali Akbar Yazdan Pour for their helpful discussions
and comments. Also the author is thankful to the reviewer whose comments led to great improvements in the
presentation of the paper.

%%%%%%%%%%%%%%%%%%%%%%%%%%%%%%%%%%%%%%%%%%%%%%%%%%%%%%%%%%%%%%%%%%%%%%%%%%%%%%%%%%%%%%%%%%%%%%%%%%%%%%%%%%%%%%%%%%

\end{document}